\def\@settitle{\begin{center}%
  \baselineskip14\p@\relax
  \bfseries
  \uppercasenonmath\@title
  \@title
  \ifx\@subtitle\@empty\else
     \\[1ex]\uppercasenonmath\@subtitle
     \footnotesize\mdseries\@subtitle
  \fi
  \end{center}%
}
\def\subtitle#1{\gdef\@subtitle{#1}}
\def\@subtitle{}
\newtheorem{theorem}{Theorem}
\newtheorem{lemma}{Lemma}
\newtheorem{defi}{Definition}
\newtheorem{problem}{Problem}
\newtheorem{conj}{Conjecture}
\DeclareMathOperator{\conv}{conv}
\DeclareMathOperator{\vol}{vol}
\newcommand{\K}{\mathcal{K}}
\newcommand{\R}{\mathbb{R}}
\renewcommand{\S}{\mathcal{S}}
\begin{document}
\title[On convex bodies that are characterizable by volume function]{On convex bodies that are characterizable by volume function}
\subtitle{"Old and recent problems for a new generation" \\ a survey}
\author[\'A. G.Horv\'ath]{\'Akos G.Horv\'ath}
\address {\'A. G.Horv\'ath \\ Department of Geometry \\ Mathematical Institute \\
Budapest University of Technology and Economics\\
H-1521 Budapest\\
Hungary}
\email{ghorvath@math.bme.hu}
\date{}

\subjclass[2010]{52A40, 52A38, 26B15, 52B11}
\keywords{centred convex body, central symmetric convex body, convex-hull function, covariogram function, difference body, Blaschke body.}

\begin{abstract}
The "old-new" concept of convex-hull function was investigated by several authors in the last seventy years. A recent research on it led to some other volume functions as the covariogram function, the widthness function or the so-called brightness functions, respectively. A very interesting fact that there are many long-standing open problems connected with these functions whose serious investigation closed before the "age of computers". In this survey, we concentrate only on the three-dimensional case, we will mention the most important concepts, statements, and problems.
\end{abstract}

\maketitle

\section{Introduction}

The "old-new" concept of convex-hull function was investigated by several authors in the last seventy years. A recent research on it led to some other volume functions as the covariogram function, the widthness function or the so-called brightness functions, respectively. A very interesting fact that there are many long-standing open problems connected with these functions whose serious investigation closed before the "age of computers". The structure of the conjectured optimal bodies reflect quite a theoretical attitude seemingly there was no computer search to support them. In this paper we collect some among them (using the necessary theoretical knowledge) to inspire the experts of the computer for such research which can reorder the map of these problems. We concentrate on the three-dimensional case, we mention the most important concepts, statements, and problems.

We use the following notation. $K$ is a convex body, the class of convex bodies is denoted by $\K$. $K+L$ means the Minkowski sum of the convex bodies, $K$, $L$ and $-K$ is the reflected image of $K$ with respect to the origin. $\R^n$ and $\S^{n-1}$ are the analytic model of the $n$-dimensional Euclidean space and the $(n-1)$-dimensional sphere, respectively. $C^1$-body, $C^2_+$-body mean convex body with continuously differentiable boundary and convex body with boundary of positive curvature, respectively. We use the following special notation
\begin{itemize}
\item $DK$: the difference body of $K$
\item $\triangle K$: the central symmetral of $K$
\item $\Pi K$: the projection body of $K$
\item $\triangledown$: the Blaschke body of $K$
\item $g_K(u)$: the covariogram function of $K$
\item $h_K(u)$: the support function of $K$
\item $w_K(u)$: the width function of $K$
\item $b_K(u)$: the brightness function of $K$
\item $G_K(u)$: the convex-hull function of $K$
\item $G(u)$: the Gauss map.
\end{itemize}
From the general theory of convex sets we use the concept of Minkowski norm generated by a centrally symmetric with respect to the origin convex body, the concept of polar body and the special $2$-dimensional norm the so-called Radon norm with its unit disk which boundary is the Radon curve. The following statements are important in our arguments:
\begin{itemize}
\item Brunn-Minkowski inequality: If $K$ and $L$ convex bodies then
$$
\vol_n\left(K+L\right)^{\frac{1}{n}}\leq \vol_n\left(K\right)^{\frac{1}{n}}+\vol_n\left(L\right)^{\frac{1}{n}}
$$
\item Alexandrov's projection theorem: Let $1\leq i\leq k\leq n-1$, and let $K$, $L$ be centrally symmetric compact convex sets, of dimension at least $i+1$, in $\R^n$. If $V_i(K|S)=V_i(L|S)$ for all $S$ $k$-dimensional subspace, then $K$ is a translate of $L$.
\item Cauchy's projection formula:
$$
V(K|u^{\perp})=\frac{1}{2}\int\limits_{\S^{n-1}}\langle u,u\rangle\mathrm{d}S(K,v).
$$
\item Minkowski's existence theorem: For the finite Borel measure $\mu$ in $\S^{n-1}$ to be $S_{n-1}(K,\cdot)$ for some convex body $K\in \K$, it is necessary and sufficient that $\mu$ not be concentrated on any great subsphere of $\S^{n-1}$, and
    $$
    \int\limits_{\S^{n-1}}u\mathrm{d}\mu(u)=0.
    $$
\item Minkowski's first inequality: Let $V^n(K,n-1;l)$ be the mixed volume \hfill \break $V^n(K,\cdots, K,L)$ where the number of the copies of $K$ is $n-1$, then
$$
V^n(K,n-1;L)\geq \vol_n^{n-1}(K)\vol_n(L),
$$
with equality if and only if $K$ and $L$ lie in parallel hyperplanes or are homothetic.
\end{itemize}

\section{The covariogram function}\label{sec:covariogram}

Let $K$ be a convex body in $\R^n$. The covariogram $g_K$ of $K$ is the function

\begin{equation}\label{def:covariagram}
g_K(t):=\vol_n(K\cap(K+t))
\end{equation}
where $t \in \R^n$ and $\vol_n $ denotes $n$-dimensional volume. This functional, which was introduced by Matheron in his book \cite{matheron} on random sets, is also sometimes called the set covariance and it coincides with the autocorrelation of the characteristic function $1_K$ of $K$:
$$
g_K = {\bf 1}_K \star {\bf 1}_{-K}.
$$

The covariogram $g_K$ is clearly unchanged by a translation or a reflection of $K$. (The term reflection will always mean reflection in a point.)
Matheron \cite{matheron} in 1986 asked the following question and conjectured a positive answer for the case $n = 2$.

\begin{problem}[Covariogram problem.] Does the covariogram determine a convex body, among all convex bodies, up to translations and reflections?
\end{problem}

The first contribution to Matheron’s question was made by Nagel \cite{nagel} in 1993, who confirmed Matheron’s conjecture for all convex polygons. Other partial results towards the complete confirmation of this conjecture in the plane have been proved by Schmitt \cite{schmitt}, Bianchi, Segala and Volcic \cite{bianchi-segala-voicic}, Bianchi \cite{bianchi}, Averkov and Bianchi \cite{averkov-bianchi}.
In general, the answer to the covariogram problem is negative, as the author \cite{bianchi} proved by finding counterexamples in $\R^n$, for any $n\geq 4$. Indeed, the covariogram of the Cartesian product of convex sets $K \subset \R^k$ and $L \subset \R^m$ is the product of the covariograms of $K$ and $L$. Thus $K\times L$ and $K \times (-L)$ have equal covariograms. However, if neither $K$ nor $L$ is centrally symmetric, then $K \times L$ is neither a translation nor a reflection of $K \times (-L)$. To satisfy these requirements the dimension of both sets must be at least two and thus the dimension of the counterexamples is at least four. We note that these counterexamples can be polytopes but not $C^1$ bodies.
For $n$-dimensional convex polytopes $P$ , Goodey, Schneider and Weil \cite{goodey-schneider-weil} prove that if $P$ is simplicial and $P$ and $-P$ are in general relative position (the polytope is a generic polytope), the covariogram determines $P$. Finally, Bianchi in \cite{bianchi-threedim} proved that for three-dimensional polytopes the conjecture is also true. We note that the general case of dimension three is open, it is still not known
whether every three-dimensional convex body is determined by its covariogram. We collected the most important statements in the following theorem:
\begin{theorem}\label{thm:covariagram}
Let $K$ be a convex body of dimension $n$ in the Euclidean $n$-space.
\begin{itemize}
\item The covariagram function determines a convex body $K$, among all convex bodies, up to translations and reflections in the following cases:
\begin{itemize}
\item $K$ is centrally symmetric,
\item $\dim K=2$,
\item $K$ is a polytope, and $\dim K=3$,
\item $K$ is a generic polytope of dimension $n$.
\end{itemize}
\item If $n\ge 4$ there are convex polytopes which don't determine by its covariagram function.
\end{itemize}
\end{theorem}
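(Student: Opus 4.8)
The plan is to prove the theorem case by case, since it aggregates results of quite different difficulty, and then to record the negative statement for $n\ge 4$ as an explicit construction. A convenient common reduction is the elementary observation that the support of the covariogram is the difference body,
$\{t:g_K(t)>0\}=DK=K+(-K)$, while $g_K(0)=\vol_n(K)$. Thus $DK$ and the volume of $K$ are read off from $g_K$ for free, and the entire content of the problem is to recover the remaining shape information. It is illuminating to phrase this as a phase-retrieval question: since $g_K=\mathbf 1_K\star\mathbf 1_{-K}$ is an autocorrelation, its Fourier transform equals $|\widehat{\mathbf 1_K}|^2$, so $g_K$ fixes the modulus of $\widehat{\mathbf 1_K}$ but not its phase, and translating or reflecting $K$ alters only that phase. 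The covariogram problem is therefore exactly whether $K$ is determined by $|\widehat{\mathbf 1_K}|$ up to these trivial ambiguities.

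I would dispatch the two easy directions first. If $K$ is centrally symmetric, which after translation we may assume means $-K=K$, then $DK=2K$; hence the support of $g_K$ already equals $2K$ and recovers $K$ up to translation with no further work, the reflection ambiguity being vacuous. For the negative part I would make the Cartesian-product construction of the discussion above explicit: choosing $K\subset\R^k$ and $L\subset\R^m$ with $k,m\ge 2$ and neither factor centrally symmetric, the identity $g_{K\times L}=g_K\cdot g_L$ (immediate from $\mathbf 1_{K\times L}=\mathbf 1_K\otimes\mathbf 1_L$) yields $g_{K\times L}=g_{K\times(-L)}$, whereas $K\times L$ is neither a translate nor a reflection of $K\times(-L)$. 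The requirement that both factors be non-symmetric and at least two-dimensional forces the dimension up to $4$; taking both factors to be simplices makes the example polytopal, so the failure already occurs among polytopes.

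The substantial positive results are the planar case and the three-dimensional polytope case, and for these I would import the reconstruction machinery rather than reprove it. In the plane (following the line Nagel--Schmitt--Bianchi--Segala--Volcic, completed by Averkov--Bianchi) the strategy is to analyse the local behaviour of $g_K$ near $\bd(DK)$: the singularities and one-sided derivatives of $g_K$ there encode metric and directional data of the boundary of $K$, from which $K$ is synthesised, treating separately the smooth boundary, the corners, and the boundary segments. For three-dimensional polytopes (Bianchi) one recovers from the singular locus of $g_P$ on $\bd(DP)$ the combinatorial and metric structure of the facets and then reconstructs $P$; the generic simplicial case (Goodey--Schneider--Weil) is easier because the facet normals of $P$ and $-P$ meet transversally, so the decomposition of $DP$ into cells on which $g_P$ is polynomial exposes the face lattice directly.

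The hard part is unquestionably the full planar reconstruction and the general three-dimensional polytope case. In both the obstruction is the same: the covariogram data near $\bd(DK)$ pins down $K$ only up to the reflection ambiguity, and excluding a genuinely different body with the same covariogram demands a case-heavy analysis of boundary structure, in particular the interaction of segments, vertices, and pairs of parallel faces. This is precisely where the question resisted resolution for decades, and it is also why the general three-dimensional (non-polytopal) case remains open. Since no clean unified argument is available, I would not attempt one, and would instead cite Averkov--Bianchi for $\dim K=2$, Bianchi for three-dimensional polytopes, and Goodey--Schneider--Weil for the generic polytope.
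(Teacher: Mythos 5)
Your proposal matches the paper's treatment essentially verbatim: the paper likewise proves only the two elementary items directly (the centrally symmetric case via $DK=2K$ being the support of $g_K$, and the negative case via the Cartesian product $K\times L$ versus $K\times(-L)$ with both non-symmetric factors of dimension at least two), and delegates the planar case, the three-dimensional polytope case, and the generic polytope case to Nagel--Schmitt--Bianchi--Segala--Vol\v{c}i\v{c}--Averkov, Bianchi, and Goodey--Schneider--Weil respectively. Your added Fourier phase-retrieval framing and the sketches of the cited reconstructions are accurate embellishments but do not change the route.
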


We first deal with such bodies which are connected to this nice problem.

\subsection{The difference body and the central symmetral of $K$}
\begin{figure}
\centering
\includegraphics[scale=0.8]{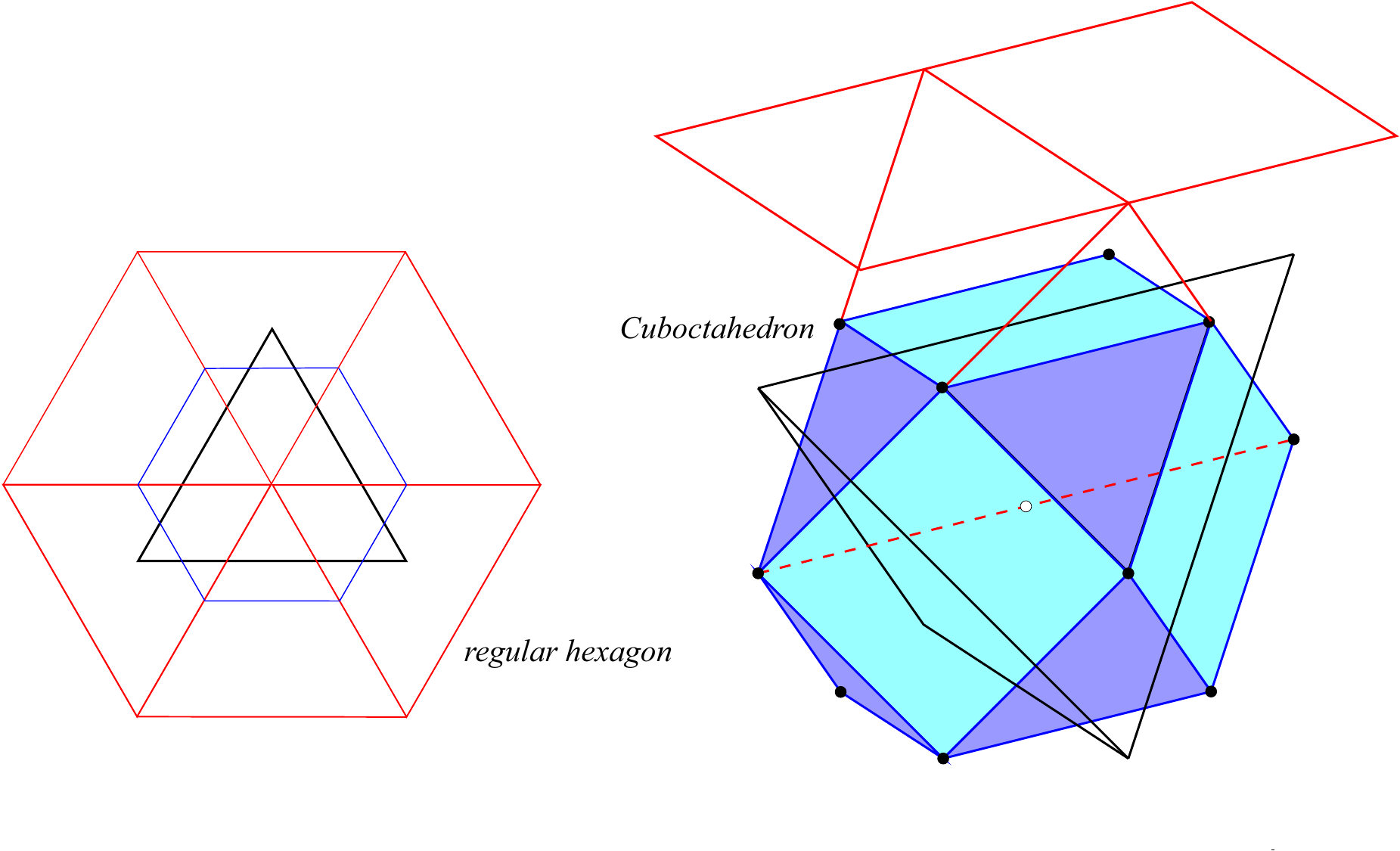}
\caption[]{The difference bodies of simplices.}
\label{fig:differencebody}
\end{figure}
First observe that the covariagram function $g_K$ determines  the volume of $K$ since $g_K(0)=\vol_n(K)$. What the covariogram
obviously does determine is its support, which is the convex body
\begin{equation}\label{def:differencebody}
\mathrm{D}K := \{x \in \R^n : K \cap (K + x)\neq \emptyset\} = K - K.
\end{equation}
This is the \emph{difference body of K}. Sometimes is more convenient to define the so-called \emph{central symmetral} $\triangle K$ of $K$ which is the homothetic by factor $1/2$ copy of the difference body. The difference body can be constructed as follows; suppose that the origin $o$ is in the interior of $K$, and take the union of all translates of $-K$ which are placed so that the corresponding translate of $o$ lies on the boundary of $K$. Finally, if we dilate by a factor $1/2$ we get $\triangle K$. The central symmetral of a regular triangle is a regular hexagon, and the central symmetral of a regular tetrahedron is the cuboctahedron (see, Figure \ref{fig:differencebody}). Since for a centrally symmetric convex body $K$ with centre $c$ holds $-K=K-2c$
$$
tK+(1-t)(-K)=K-2c(1-t),
$$
and in particular $\triangle K$ a translate of $K$. Hence a centrally symmetric convex body is determined (among all convex bodies) by its covariogram, since $2K$ is a translated copy of $\mathrm{D}K$. In general, $\triangle K$ doesn't determines $K$ meaning that there are non-congruent convex bodies with the same central symmetral.

\subsection{The support function and the width function}

The \emph{support function} $h_K$ of $K$ is defined by
\begin{equation}\label{def:supportfunction}
h_K(x)=\max\{ \langle x,y\rangle : y\in K\},
\end{equation}
for $x\in\R^n$. As a function, the support function is positively homogeneous, subadditive, that is sublinear function. It can be seen that a convex body is determined by its support function (see (0.6) in \cite{gardner}).

Let $K$ be compact, convex set in $\R^n$. Then $K$ has two supporting hyperplanes which are orthogonal to a unit vector $u$. The distance between these hyperplanes is the \emph{width} $w_K(u)$ of $K$ in the direction of $u$. The \emph{width} of $K$ is the maximal value of the width function the \emph{thickness} of $K$ is the minimal one, respectively. The formal definition of the width function is
\begin{equation}\label{def:widthfunction}
w_K(u):=h_K(u)+h_K(-u)
\end{equation}
for $u\in \S^{n-1}$. Since
$$
w_{tK+(1-t)K}=tw_K+(1-t)w_{-K}=w_K
$$
there is a whole continuum of non-congruent compact convex sets which have the same width function. One of them is the central symmetral of $K$ showing that $w_{\triangle K}=w_K$ for all convex compact body. (Generally, $\triangle K$ is the unique centred compact convex set with this property.)

The support function of the central symmetral is equal to $h_{\triangle K}=1/2(h_K+h_{-K})=1/2w_K$ thus the support function of the difference body is the width function of K. In particular, the covariogram of a convex body determines its width function, but this property also allows a great uncertainness in the determination of $K$. We can define the class of convex bodies with constant width by the equation
\begin{equation}\label{def:bodiesofconstantwidth}
w_K(u)=h_K(u)+h_K(-u)=\mathrm{constant}.
\end{equation}
Clearly, the $n$-dimensional ball is of constant width. On the plane, nonspherical examples are the Reuleaux polygons, and there is analogous theorem in any dimension. Precisely, it can be proved that nonspherical convex bodies of constant width exist in $\R^n$ for all $n\geq 2$ (see, Theorem 3.2.5 in \cite{gardner}). There are nice investigations on this class of bodies. Evidently, a convex body $K$ is of constant width if and only if $\triangle K$ is a ball. If $K$ is a centrally symmetric convex body in $\R^n$ then $h_K(u)=h_{-K}(u)=h_K(-u)$ implying that $w_K(u)=h_K(u)+h_{-K}(u)=2h_K(u)$. So if the width function of a centrally symmetric convex body is constant then its support function is also, hence the body must be a ball. Using Brunn-Minkowski inequality (see Theorem 7.1.1 in \cite{schneider}) we get that in the class of convex bodies of constant width $d$ the ball has the largest volume. In fact, the central symmetral of $K$ is $\triangle K=1/2(K+(-K))$ with the same width as of $K$ but it is centrally symmetric, hence it is a ball with the same width function as $K$. On the other hand Brunn-Minkowski inequality gives for the central symmetral (which is the ball of the same class of constant width) that
$$
\vol_n(\triangle K)^{\frac{1}{n}}=\vol_n\left(\frac{1}{2}(K+(-K))\right)^{\frac{1}{n}}\geq \vol_n(K)^{\frac{1}{n}},
$$
as we stated. Now the question that \emph{Whose body has minimal volume in a class of convex bodies of constant width $d$?} is natural.
In the class of all plane convex sets of constant width $d$, the Reuleaux triangle has least area. The first proofs of this theorem are
contained in the papers by Lebesgue \cite{lebesgue} and by Blaschke \cite{blaschke}. Similar question was investigated by P\'al \cite{pal} who
showed that the regular triangle has least area among all convex sets of given thickness $d$.
Recently, Campi, Colesanti and Gronchi investigated in the minimum volume problem in \cite{campi-colesanti-gronchi}. He considered the $3$-dimensional case and raised the following problems:
\begin{problem}[Campi-Colesanti-Gronchi]
Find a convex body of minimum volume in each of the following classes:

A) The class of convex bodies with constant width $d$;

B) The class of convex bodies with thickness $d$.

\end{problem}

Notice that the existence of solutions for each of these problems is guaranteed by standard compactness arguments. Problems A and B are still unsolved. On the other hand, several authors turned their attention to the classes of convex bodies involved in those problems. For a detailed history please read the paper \cite{campi-colesanti-gronchi}. We consider only those bodies which are good candidates to solve these problems. First we mention here the Reuleaux tetrahedron which can be get from a regular tetrahedron with edge length $d$, as the intersection of four balls centred at the vertices of the tetrahedron with radius $d$. Unfortunately, Reuleaux tetrahedron isn't a body of constant width, the midpoints of its two opposite curved edges have greater distance as $d$. Meissner and Schilling \cite{meissner-schilling} showed how to modify the Reuleaux tetrahedron to form a body of constant width, by replacing three of its edge arcs by curved patches formed as the surfaces of rotation of a circular arc. Incidentally, as Meissner mentioned in p. 49 of \cite{meissner-1911}, the ball is the only body of constant width that is bounded only by spherical pieces. According to which three edge arcs are replaced (three that have a common vertex or three that form a triangle) there result two noncongruent shapes that are called \emph{Meissner tetrahedra} (see Figure \ref{fig:meissnerbody}).

\begin{figure}
\centering
\includegraphics[scale=0.8]{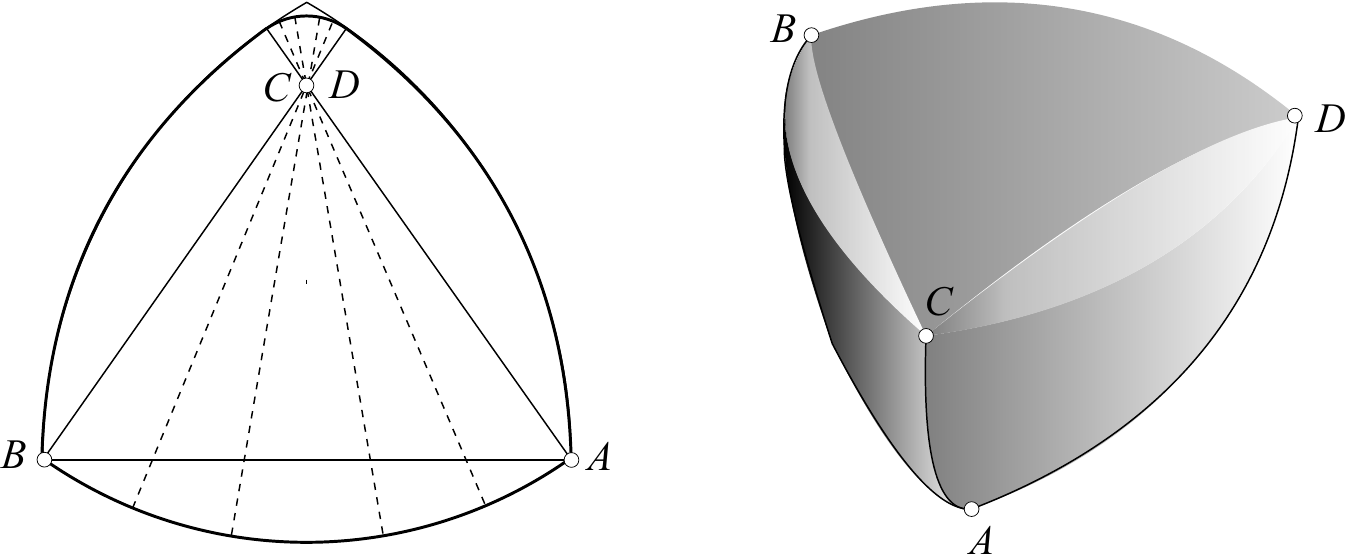}
\caption[]{The Meissner tetrahedron.}
\label{fig:meissnerbody}
\end{figure}

Bonnesen and Fenchel conjectured in \cite{bonnesen-fenchel}:

\begin{conj}[Bonnesen-Fenchel, 1934]\label{conj:bonnesenfenchel}
Meissner tetrahedra are the minimum-volume three-dimensional shapes of constant width.
\end{conj}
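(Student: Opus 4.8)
The plan is to transplant the strategy behind the planar Blaschke--Lebesgue theorem (the Reuleaux triangle minimizes area, already recalled above) into three dimensions. First I would fix the width $d$ and invoke the compactness remark already made in the text: the class $\mathcal W_d$ of convex bodies of constant width $d$ is closed in the Hausdorff metric and $\vol_3$ is continuous on it, so a minimizer exists. The decisive first simplification is to recast the functional. In $\R^3$ a body $K$ of constant width $d$ satisfies the classical relation $\vol_3(K)=\tfrac{d}{2}\,\area(K)-\tfrac{\pi}{3}d^3$, obtained from the Steiner formula together with $w_K\equiv d$, i.e.\ $h_K(u)+h_K(-u)=d$ (one checks it on the ball of radius $d/2$, where both sides equal $\pi d^3/6$). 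Since volume is thus an increasing affine function of surface area, minimizing $\vol_3$ over $\mathcal W_d$ is equivalent to minimizing $\area$, and I would carry out the variation on whichever functional is the more convenient.

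Next I would study the minimizer through its support function $h=h_K$ on $\Sph^2$, subject to the pointwise antipodal constraint $h(u)+h(-u)=d$. Writing the surface area (or the volume) as an integral of $h$ against the surface area measure $S_2(K,\cdot)$ and admitting only variations $\dot h$ with $\dot h(u)+\dot h(-u)=0$, I would extract the Euler--Lagrange conditions for a minimizer. The expected output, mirroring the plane, is a \emph{structure theorem}: an optimal body is bounded by finitely many spherical caps of radius $d$, together with the complementary patches that the width constraint then forces, namely pieces of surfaces of revolution of circular arcs of radius $d$ --- exactly the two kinds of boundary piece present in a Meissner tetrahedron. The caps would be centred at a finite set of boundary points, which the constant-width relation puts into antipodal correspondence with the caps themselves, so that the admissible combinatorial types form a discrete, essentially self-dual family. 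It would then remain to enumerate these types, parametrize each by its finitely many residual degrees of freedom, and compare volumes; the Meissner tetrahedra should emerge as the minimum, and this last step is naturally organized as the kind of finite-dimensional, computer-assisted optimization this survey advocates.

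The hard part, and the reason the conjecture is still open, is precisely the structure theorem of the second step. The antipodal constraint $\dot h(u)+\dot h(-u)=0$ couples the two hemispheres of $\Sph^2$: no perturbation is ever local, since any change at $u$ forces a compensating change at $-u$, and the Lagrange-multiplier argument that cleanly pins down the boundary arcs in the plane does not close up in space. Worse, $\vol_3$ is not convex on the constraint manifold, so critical points need not be minima and there is no a priori bound on the number of spherical caps; one cannot yet exclude an optimum with higher symmetry, or with more or fewer caps than a Meissner tetrahedron. Consequently the honest status of the argument is \emph{conditional}: granting that the minimizer lies in the Meissner-type class (spherical caps meeting spindle surfaces in the tetrahedral pattern), the finite computation of the third step should single out the two Meissner tetrahedra, but \emph{proving} that the true minimizer has this structure is the missing ingredient. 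It is here that a systematic numerical search --- of the sort the paper is written to encourage --- could at least reveal which combinatorial type the rigorous argument must target.
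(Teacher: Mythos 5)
This statement is a \emph{conjecture}, and the paper gives no proof of it: it explicitly records that the Bonnesen--Fenchel conjecture ``is still open.'' So there is no argument in the paper against which to compare yours, and your proposal --- as you yourself say in its last paragraph --- is a research programme rather than a proof. What you have written is a correct and well-informed account of \emph{why} the problem is hard, but it does not establish the statement.

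On the substance: your preliminary reductions are sound. Existence of a minimizer follows from Blaschke selection plus continuity of volume, as the paper notes for the Campi--Colesanti--Gronchi problems. The identity $\vol_3(K)=\tfrac{d}{2}\area(K)-\tfrac{\pi}{3}d^3$ for bodies of constant width $d$ in $\R^3$ is the classical Blaschke relation (it follows from the Steiner/mixed-volume expansion of $\vol(K+(-K))$ using $K+(-K)=dB^3$), and your check on the ball is correct; so minimizing volume and minimizing surface area are indeed equivalent here, a reduction special to dimension $3$. The genuine gap is exactly where you locate it: the ``structure theorem'' asserting that a minimizer is bounded by finitely many spherical caps and spindle (rotational) patches of radius $d$ arranged in the Meissner combinatorics. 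The antipodal constraint $h(u)+h(-u)=d$ makes every admissible perturbation nonlocal, the volume functional is not convex on the constraint set, and no one has excluded minimizers with a different (or infinite) cap structure; the Euler--Lagrange analysis does not close up as it does for Blaschke--Lebesgue in the plane. Even granting the structure theorem, the final enumeration-and-comparison step is only sketched. So the proposal should be read as a correct statement of the obstruction, not as progress toward removing it; the conjecture remains open, consistent with the paper.
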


This conjecture is still open. (I propose to read the nice paper of Bernd and Weber \cite{kawohl-weber} on this conjecture.)  In connection with this problem, Campi, Colesanti and Gronchi showed

\begin{theorem}[Campi-Colesanti-Gronchi, 1996]\label{thm:c-c-gconstantwidth}
The minimum volume surface of revolution with constant width is the surface of revolution of a Reuleaux triangle through one of its symmetry axes.
\end{theorem}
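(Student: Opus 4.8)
The plan is to collapse this three-dimensional problem onto a planar extremal problem for the meridian curve and then to single out the Reuleaux triangle as the extremal meridian. A surface of revolution bounds a convex body $K$ that is rotationally symmetric about its axis $\ell$; let $F\subset\R^2$ be its meridian section, a planar convex body symmetric about $\ell$. Taking $\ell$ as the $z$-axis and writing $u(\theta)$ for a unit vector making angle $\theta$ with $\ell$, the rotational symmetry gives $h_K(u(\theta))=\max_{(x,z)\in F}(x\sin\theta+z\cos\theta)=H_F(\tfrac{\pi}{2}-\theta)$, where $H_F$ denotes the planar support function of $F$. Since $-u(\theta)$ makes angle $\pi-\theta$ with $\ell$, and using the axial symmetry $H_F(\gamma)=H_F(\pi-\gamma)$ of $F$, one obtains
$$
w_K(\theta)=h_K(u(\theta))+h_K(-u(\theta))=H_F(\tfrac{\pi}{2}-\theta)+H_F(\tfrac{\pi}{2}-\theta+\pi),
$$
which is exactly the planar width of $F$ in the direction $\tfrac{\pi}{2}-\theta$. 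Hence $K$ has constant width $d$ \emph{if and only if} its meridian $F$ has constant width $d$, and the constant-width surfaces of revolution correspond bijectively to the axially symmetric planar convex bodies of constant width. By Pappus' theorem the enclosed volume can be written as $\vol(K)=\pi\int_F|x|\,\mathrm{d}A=\pi\int r(z)^2\,\mathrm{d}z$, where $r(z)$ is the half-width of $F$ at height $z$.

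It then remains to minimize the functional $V(F)=\pi\int_F|x|\,\mathrm{d}A$ over all planar convex bodies $F$ of constant width $d$ symmetric about the $z$-axis, and to prove that the minimizer is a Reuleaux triangle placed so that the axis runs through one vertex and the midpoint of the opposite arc. (All three symmetry axes of a Reuleaux triangle are congruent, so this is unambiguous.) Parametrizing $\partial F$ by the outer normal angle $\alpha$ gives $x'=-\rho\sin\alpha$, $z'=\rho\cos\alpha$ with radius of curvature $\rho(\alpha)=H_F(\alpha)+H_F''(\alpha)\ge0$; constant width becomes $\rho(\alpha)+\rho(\alpha+\pi)=d$ and the symmetry becomes $\rho(\alpha)=\rho(\pi-\alpha)$. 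The admissible set of densities $\rho$ is convex and weakly compact, and its extreme points are precisely the bodies with $\rho(\alpha)\in\{0,d\}$, that is, the Reuleaux polygons symmetric about the axis.

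The heart of the argument is to show that $V$ attains its minimum at an extreme point of this convex set, so that the search reduces to symmetric Reuleaux polygons; the cleanest route is to establish that $V$ is a concave functional of the density $\rho$ --- a Brunn-Minkowski-type property of the revolution volume --- which forces the minimum onto the extreme points. Granting this, a finite-dimensional comparison of the symmetric Reuleaux polygons shows that passing from the triangle to a polygon with more arcs rounds $F$ out and strictly increases the first moment $\int_F|x|\,\mathrm{d}A$ (consistently with the fact, recorded above, that the ball --- the limiting round shape --- has the largest volume in the constant-width class), so the Reuleaux triangle with a vertex on the axis is optimal.

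The main obstacle is precisely this reduction to extreme points. Because $V$ is cubic in the support function, its concavity under the constant-width constraint is not automatic and must be extracted from a Brunn-Minkowski or rearrangement argument; one must also treat with care the non-smooth points of the extremal meridian (the vertex lying on the axis, where $r=0$, and the corners of the competing Reuleaux polygons) and check that no orientation placing the axis through an arc, rather than through a vertex, yields a smaller volume.
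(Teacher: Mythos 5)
The paper itself gives no proof of this theorem; it is a survey quoting the result from Campi--Colesanti--Gronchi \cite{campi-colesanti-gronchi}, so your attempt can only be judged on its own merits. Your reduction step is sound: for a body of revolution the support function in a direction $u(\theta)$ is the planar support function of the meridian $F$ in the corresponding direction, so $K$ has constant width $d$ if and only if $F$ is an axially symmetric planar body of constant width $d$, and Pappus converts the volume into the weighted area $\pi\int_F|x|\,\mathrm{d}A$. That correctly turns the theorem into a planar variational problem, which is the right first move.

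The rest, however, has a genuine gap --- in fact you flag it yourself and then do not close it. First, the concavity of $V(F)=\pi\int_F|x|\,\mathrm{d}A$ as a functional of the curvature density $\rho$ on the constant-width class is the entire content of the theorem at this point; it is a cubic functional of the support function, and no Brunn--Minkowski inequality hands you concavity of a first moment of this kind for free. "Granting this" is granting the theorem. Second, even if concavity were established, your identification of the extreme points is wrong: the extreme bodies of the (axially symmetric) constant-width class are those with $\rho(\alpha)\in\{0,d\}$ almost everywhere, and these include bodies whose density switches between $0$ and $d$ infinitely often --- they are \emph{not} exhausted by Reuleaux polygons, and indeed the extreme points are dense in the whole class. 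This is precisely the known obstruction to proving Blaschke--Lebesgue-type results by a bare extreme-point argument, so the promised "finite-dimensional comparison of symmetric Reuleaux polygons" cannot finish the proof even in principle. Third, the assertion that adding arcs to the Reuleaux triangle strictly increases $\int_F|x|\,\mathrm{d}A$ is itself an unproved extremal statement (the heuristic that the ball is the \emph{maximizer} says nothing about monotonicity along this family), and the comparison between placing a vertex versus an arc on the axis is left unexamined. As it stands the argument establishes only the (correct) reformulation as a planar problem, not the theorem.
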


\begin{figure}
\centering
\includegraphics[scale=0.8]{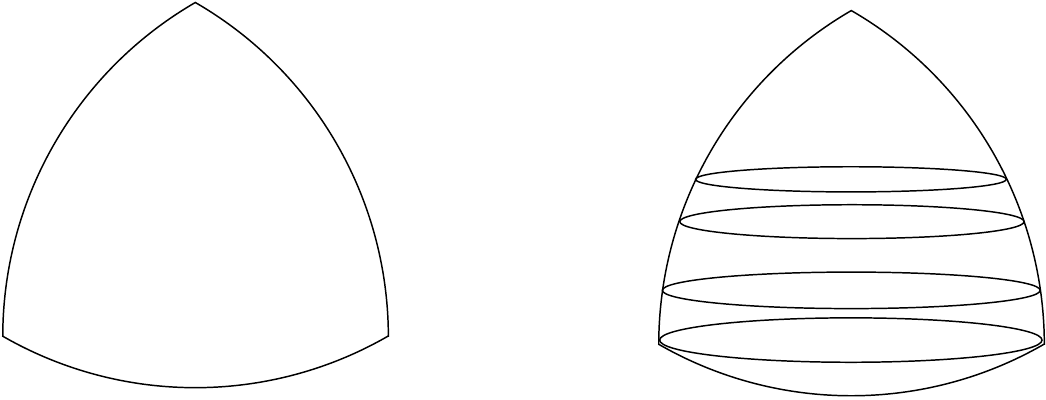}
\caption[]{The body of revolution with minimal volume.}
\label{fig:reuleaux}
\end{figure}

A candidate to solve Problem B was proposed by Heil \cite{heil}; in this case the construction is based upon a tetrahedron, too. Namely the Heil body is the convex hull of six circular arcs of radius $d$, centered at the mid-points of the edges of a regular tetrahedron of edge's length $d\sqrt{2}$, and the four vertices of a rescaled tetrahedron of edge's length $d(2\sqrt{6}-\sqrt{2})/3$.

\begin{figure}
\centering
\includegraphics[scale=0.6]{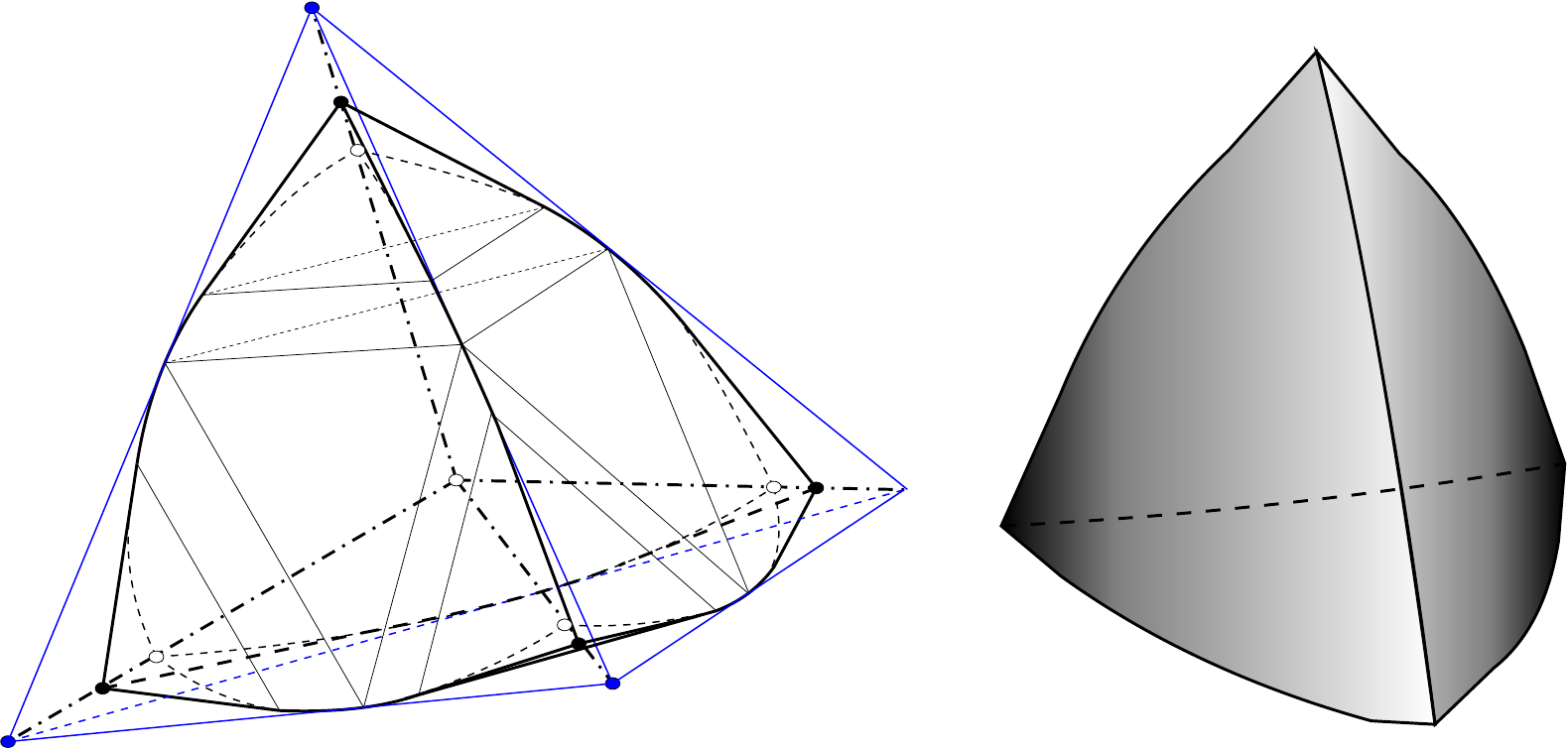}
\caption[]{The Heil-body.}
\label{fig:heilbody}
\end{figure}

\begin{conj}[Heil, 1978]\label{conj:heil}
Heil body is the minimum-volume three-dimensional body in the class of convex bodies with given thickness.
\end{conj}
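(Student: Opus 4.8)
The plan is to attack this via the theory of \emph{reduced bodies}, following the strategy that succeeds in the planar case (P\'al's theorem, whose minimizer, the regular triangle, is itself reduced). Recall that a convex body $K$ is reduced if no proper convex subset $K'\subsetneq K$ has the same thickness. First I would settle existence and shrink the search space. The infimum of volume over bodies of thickness $d$ is attained by a Blaschke-selection/compactness argument, once one normalizes position and establishes an a priori diameter bound for near-minimizers. If a minimizer $K$ were not reduced, some proper convex subset would retain thickness $d$ with strictly smaller volume, a contradiction; hence the minimizer must be a reduced body. Note that the constraint $\min_{u} w_K(u)=d$ depends only on the central symmetral $\triangle K=\tfrac12(K+(-K))$, since $w_K=w_{\triangle K}$; but the volume to be minimized is that of $K$ itself, and the map $K\mapsto\triangle K$ is many-to-one, so the problem does not decouple and one must work with $K$ directly.

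Second --- the heart of the matter --- I would analyze the boundary structure of a three-dimensional reduced body. For reduced bodies one expects that through each boundary point there passes a chord of length $d$ realizing the thickness and normal to supporting hyperplanes at both ends; the resulting contact and normality conditions should force the boundary to be assembled from spherical pieces of radius $d$ and ruled patches generated by circular arcs of radius $d$ --- precisely the ingredients of the Heil construction, and the same radius-$d$ arc structure that the Campi--Colesanti--Gronchi theorem singles out in the surface-of-revolution setting. I would then argue, by a symmetrization or group-averaging argument over the rotation group, that the volume-minimizing reduced body inherits the full symmetry of a regular tetrahedron: any deviation from tetrahedral symmetry can be removed without increasing volume while preserving thickness. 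Combined with the local arc-of-radius-$d$ structure, this should pin down the combinatorial type to be that of the Heil body.

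Finally, once the combinatorial type and the tetrahedral symmetry group are fixed, the problem collapses to a finite-dimensional optimization over the radii and positions of the defining arcs and the four vertices; solving it and verifying that the optimizer coincides with the convex hull of the six radius-$d$ arcs centered at the edge midpoints of the tetrahedron of edge length $d\sqrt{2}$ together with the four rescaled vertices would complete the argument.

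The main obstacle is the second step. The structure theory of reduced bodies in $\R^3$ is far less developed than in the plane: it is not even elementary to bound the diameter of a three-dimensional reduced body in terms of its thickness, and there is no known symmetrization operation that simultaneously preserves thickness, decreases volume, and drives a body toward tetrahedral symmetry. Proving that the minimizer must have the conjectured boundary structure and full tetrahedral symmetry --- rather than some less symmetric Reuleaux-type or ruled competitor --- is exactly the gap that keeps this conjecture open.
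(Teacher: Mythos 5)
The statement you are asked to prove is Conjecture \ref{conj:heil}, which the paper records as an \emph{open} problem: it explicitly states that Problems A and B of Campi--Colesanti--Gronchi are still unsolved, and the only partial result it offers toward Problem B is Theorem \ref{thm:c-c-ggiventhickness}, which settles the question inside the class of bodies of revolution (where the minimizer turns out to be a cone over a regular triangle). There is therefore no proof in the paper to compare yours against, and your text is not a proof either --- it is a program whose decisive step you yourself concede is missing. The first step (existence of a minimizer by Blaschke selection, and the observation that a minimizer must be a reduced body) is fine and standard. Everything after that is unsupported.

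Concretely, two of your intermediate claims would fail or are unjustified. First, the claim that the boundary of a volume-minimizing reduced body of thickness $d$ must be assembled from spherical pieces of radius $d$ and ruled patches generated by circular arcs of radius $d$ is contradicted, at least within the one subclass where the problem is actually solved: the Campi--Colesanti--Gronchi minimizer among bodies of revolution is a cone, whose boundary is a flat disk together with a conical surface containing no radius-$d$ arcs at all. Reduced bodies in $\R^3$ need not be strictly convex and need not carry a thickness-realizing double-normal chord through every boundary point, so the normality analysis you invoke does not get off the ground. Second, the proposed symmetrization or group-averaging argument toward tetrahedral symmetry does not exist: Minkowski-averaging over a finite rotation group \emph{increases} volume by the Brunn--Minkowski inequality rather than decreasing it, Steiner-type symmetrizations do not preserve thickness, and the shaken-body operation used in the paper preserves volume while only controlling the width in a single direction. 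Since no operation is known that simultaneously preserves thickness, does not increase volume, and increases symmetry, the reduction to a finite-dimensional optimization in your third step is never reached. The conjecture remains open, exactly as you acknowledge in your final paragraph.
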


For surface of revolution Campi, Colesanti and Gronchi solved this problem using the concept of shaken body. Fix a plane $H$ orthogonal to $u\in \S^{n-1}$ and a closed half-space $H^+$ bounded by $H$. The \emph{shaken body} $S_K(u)$ of $K$ with respect to $u$, is the set contained in $H^+$ such that for every line $l$ parallel to $u$, $S_K(u)\cap l$ is either a segment having an endpoint on $H$ and the same length of $K\cap l$, or the empty set, whether $l$ intersects $K$ or not (see \cite{bonnesen-fenchel}). Clearly, the shaken body is a convex body with the same volume as of $K$. On the other hand, Campi at all proved the  width function of the shaken body in the direction of the axis of a three-dimensional convex body of revolution is greater than the width function of $K$. From this the author shew

\begin{theorem}[Campi-Colesanti-Gronchi, 1996]\label{thm:c-c-ggiventhickness}
Among all three-dimensional convex bodies of revolution with given thickness $d$, the unique body of minimum volume is the cone generated by the revolution of a regular triangle of side $2d/\sqrt{3}$, around one of its axes of symmetry.
\end{theorem}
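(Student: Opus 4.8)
The plan is to use the shaking operation $S_K(u)$ along the axis $u$ of revolution to reduce the problem to a one–parameter family of profile curves, and then to identify the optimal profile with the right triangle whose revolution is the asserted cone. First I would reduce to shaken bodies: let $K$ be a body of revolution about the axis $u$ with thickness $d$, a minimizer existing by the compactness remark already made for Problem B. Since $S_K(u)$ is again a body of revolution, has $\vol_3 S_K(u)=\vol_3 K$, and, by the quoted property, satisfies $w_{S_K(u)}(u)\ge w_K(u)$, I would argue that shaking does not decrease the thickness, so $S_K(u)$ is again admissible with the same volume. Hence the infimum is attained on the subclass of \emph{shaken} bodies, i.e. bodies bounded by a flat base in a plane $H\perp u$ together with the graph $z=f(r)$ of a concave, nonincreasing profile $f\colon[0,R]\to[0,\infty)$ revolved about $u$, where $r$ denotes the distance to the axis (concavity and monotonicity of $f$ follow from the concavity of axial chord lengths of a convex body).

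Next I would write the two competing quantities in terms of the profile. By the shell formula (equivalently Pappus's theorem),
$$
\vol_3 B=2\pi\int_0^R r\,f(r)\,\dif r,
$$
a functional that is \emph{linear} in the planar profile region $C=\{(r,z):0\le r\le R,\ 0\le z\le f(r)\}$ with weight $r$. The thickness is the delicate ingredient: through the support function, the width $w_B(\alpha)$ in the direction making angle $\alpha$ with $u$ is governed by the apex value $f(0)$ and the base radius $R$, so that the thickness $\min_\alpha w_B(\alpha)$ is attained either in the axial direction ($w=f(0)$) or in an oblique direction determined by the extreme points of $C$ on the axis and on the base rim. Carrying out this computation for the conical profile $f(r)=f(0)\bigl(1-r/R\bigr)$ shows that the axial and oblique minima coincide exactly when $f(0)=\sqrt3\,R$, and that then the thickness equals $f(0)$; normalising to thickness $d$ forces $f(0)=d$ and $R=d/\sqrt3$, which is precisely the revolution of the equilateral triangle of side $2d/\sqrt3$, of volume $\pi d^3/9$.

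It then remains to show that no shaken profile beats the triangle. Keeping the thickness equal to $d$, I would use the concavity of $f$ together with the support–line description of the oblique widths to replace an arbitrary admissible profile by the triangle through a two–point (support line) extremal argument, in the spirit of P\'al's planar theorem that the regular triangle minimises area for given thickness; the weight $r$ in the volume integral is exactly what promotes P\'al's planar extremal triangle to the revolved equilateral triangle. Uniqueness would follow from the equality cases in the width estimates and in the shaking step.

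The main obstacle I expect is precisely the thickness bookkeeping. First, one must prove that shaking cannot decrease the minimal width: the quoted result asserts only the increase of the \emph{axial} width, whereas the thickness may a priori be realised in an oblique direction, so this step needs a genuine monotonicity statement for $\min_\alpha w(\alpha)$ under shaking. Second, one must control the oblique widths of a general, non-conical shaken profile accurately enough to run the extremal argument of the previous paragraph. Everything else—the volume formula, the conical computation, and the normalisation to side $2d/\sqrt3$—is routine once these two width inequalities are in hand.
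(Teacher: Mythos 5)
Your overall strategy --- shake along the axis of revolution, pass to a concave profile with a flat base, and then run a planar extremal argument on the meridian section --- is exactly the route the paper indicates; note that the paper is a survey and records only the two shaking ingredients (volume is preserved, and the width function of a body of revolution does not decrease when it is shaken along its axis), citing \cite{campi-colesanti-gronchi} for the rest. Your first declared ``obstacle'' is therefore already covered by the quoted lemma if one reads it, as intended, as a statement about the whole width function of $S_K(u)$ rather than only about the axial width; you are nevertheless right that the axial-width-only version would not suffice, since the thickness of a body of revolution can be attained in an oblique direction. Your computation for the conical profile ($f(0)=\sqrt{3}\,R$, thickness $d$, volume $\pi d^3/9$) is correct, as is the observation that one may relax ``thickness $=d$'' to ``thickness $\geq d$'' for the minimization.

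The genuine gap is the final step. Minimizing $2\pi\int_0^R r f(r)\,\mathrm{d}r$ over concave nonincreasing profiles whose meridian section has all widths at least $d$ is \emph{not} P\'al's problem: the functional is not the area of the meridian section, and the weight $r$ destroys exactly the rotational freedom that makes P\'al's equality case (a regular triangle in an arbitrary orientation) orientation-independent. The claim that ``the weight $r$ is exactly what promotes P\'al's planar extremal triangle to the revolved equilateral triangle'' is a hope, not an argument: one must show that among all admissible meridian sections the weighted integral is minimized by an isosceles triangle with apex on the axis and base in $H$, and then separately that among such triangles the equilateral one wins --- your one-parameter cone computation establishes only the second part. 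The proposed ``two-point support-line replacement'' is not specified: it does not say which support lines are used, why the replacement decreases the weighted integral while keeping every oblique width at least $d$, or how the equality analysis yields uniqueness. Until this reduction is actually carried out, the proof is incomplete.
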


\subsection{The brightness function, the projection body and the Blaschke body}

\begin{figure}
\centering
\includegraphics[scale=0.8]{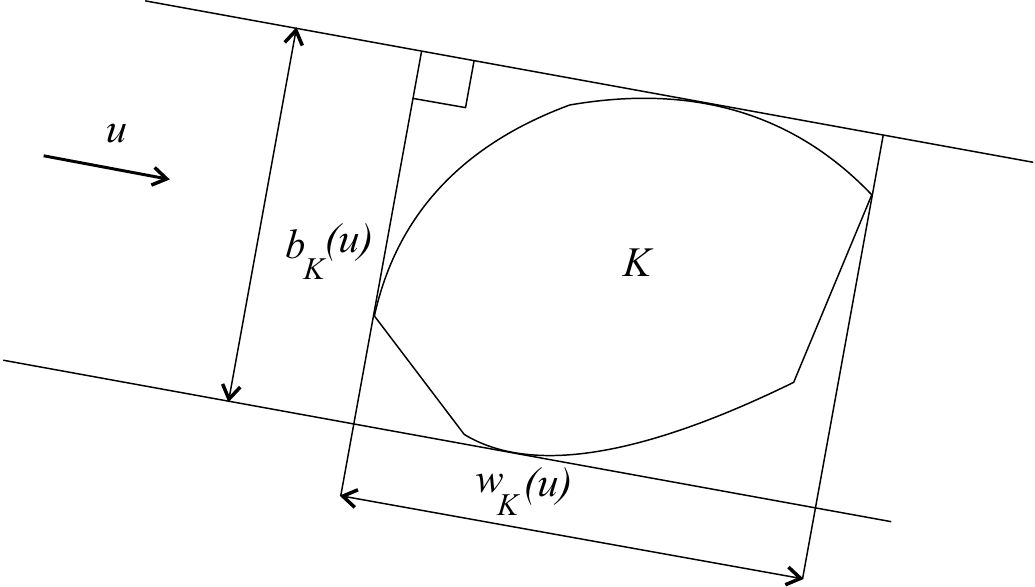}
\caption{The width function and the brightness function of a disk}
\label{fig:brightnessintheplane}
\end{figure}
The \emph{brightness function} of $K$ is also determined by the covariagram function. For a disk it can be get as the rotation of the width function around the $z$-axis. The \emph{ brightness} of $K$ \emph{in the direction of the unit vector} $u$ is the $(n-1)$-dimensional volume of the orthogonal projection of $K$ to a hyperplane with unit normal vector $u$. The \emph{brigthness function} is the function
\begin{equation}\label{def:brightnessfunction}
b_K(u):u\mapsto \vol_{n-1}(K|u^\perp),
\end{equation}
where $(K|u^\perp)$ is the orthogonal projection of $K$ onto the hyperplane with normal vector $u$. Our notes on the brigthness function of a disk is clear from Figure \ref{fig:brightnessintheplane}. The covariagram function determines the brightness function of the body. In fact, for $u \in \S^{n-1}$,
\begin{equation}\label{eq:derivofcov}
\lim\limits_{r\mapsto 0+0}\frac{\mathrm{d}}{\mathrm{d}r}g_K(ru)=-\vol_{n-1}(K|u^\perp)=-b_K(u)
\end{equation}
This follows from the facts that
$$
r\vol_{n-1}((K \cap (K + ru)) | u^{\perp}) \leq \vol_n(K \setminus (K + ru)) \leq r\vol_{n-1}(K | u^{\perp})
$$
and $\lim_{r\mapsto 0+0} K\cap (K + ru) = K$. The answer for the question whether the brightness function determines or doesn't determine the body is known. Let's first introduce the \emph{projection body} $\Pi K$ of $K$ as the uniquely defined body which support function at the point $u$ is equal to the value of the brightness function of $K$ at $u$. If we restrict our investigations to centred convex bodies of $\mathbb{R}^n$ then from Alexandrov's projection theorem (see in \cite{gardner}, Theorem 3.3.6) we get that if $K_1$ and $K_2$ have the same brightness function (or equivalently the projection bodies $\Pi K_1$ and $\Pi K_2$ are agree) then $K_2$ is a translate of $K_1$. In general, as we will see in later, with the same brightness function as $K$, there will be a continuum of (generally non-congruent) sets with this property.

On Figure \ref{fig:projectionbody} we can see the projection body of the regular tetrahedron. To determine the projection body of a polyhedron we can use the Cauchy's projection formula (see A.45 in \cite{gardner}), which connects the volume of the projections by the surface area measure as follows:
\begin{equation}\label{eq:Cauchy'sproj}
h_{\Pi K}(u):=b_K(u)=\vol_{n-1}(K_1|u^\perp)=\frac{1}{2}\int\limits_{\S^{n-1}}|\langle u,v\rangle|\mathrm{d}S_{n-1}(K,v).
\end{equation}

The surface area measure of the regular tetrahedron is the measure concentrated to four vertex of a regular tetrahedron inscribed in the unit sphere. These points have equal measures (see $n_i$ on the figure). The integral then reduces to sum of the four terms $|\langle u, n_i\rangle|$, $i=1,\ldots, 4$. Each term is the support function of a line segment $[0,\vol_{n-1}(F_i)n_i]$, where $F_i$ is the facet orthogonal to $n_i$. By the property of the support function the projection body is the Minkowski sum of the four segments. Hence $\Pi K$ is a rhombic dodecahedron (see on the right of Figure \ref{fig:projectionbody}).

\begin{figure}
\centering
\includegraphics[scale=0.8]{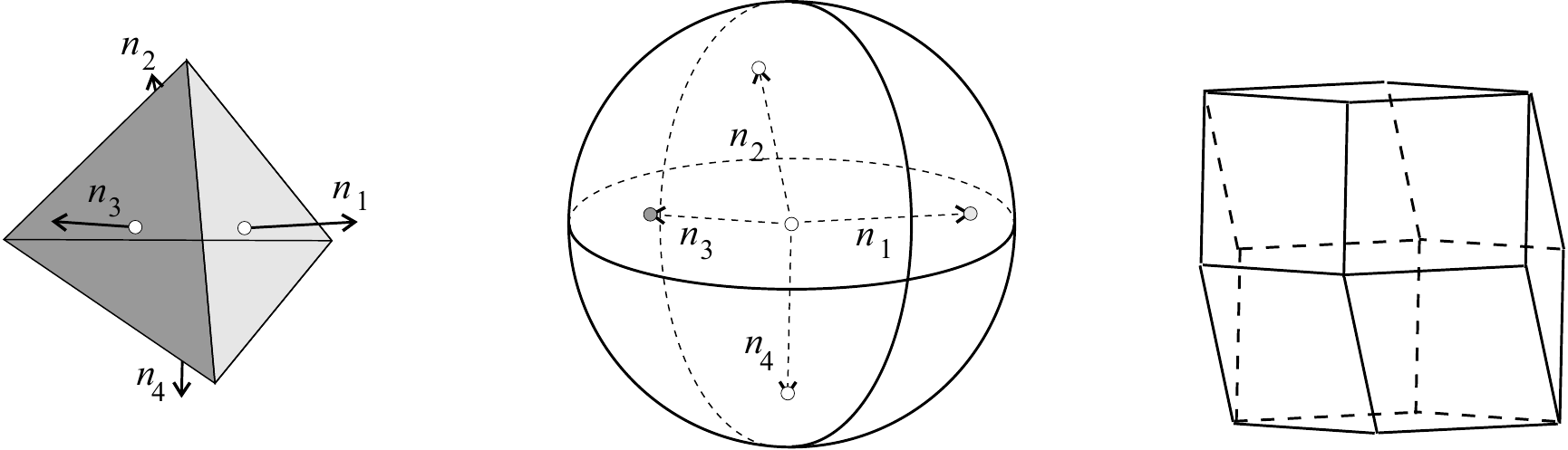}
\caption{The projection body of the regular tetrahedron.}
\label{fig:projectionbody}
\end{figure}

Let $K$ be a convex body of dimension $n$. The \emph{Gauss map} $G$ sends a unit normal vector of the boundary of $K$ to the corresponding point of the sphere $\S^{n-1}$. We can define a measure on $\S^{n-1}$ by the help of the Gauss map. For a set $H\subset \S^{n-1}$ we consider the set of those points $G^{-1}(H)$ of $K$ in which the unit normal vector is mapped by the Gauss map to a point of $H$. Let the measure of $H$ be the surface area of the set $G^{-1}(H)$. We denote by $S_{n-1}(K,\cdot)$ this surface are measure. Minkowki's existence theorem (see A.3.2. in \cite{gardner}) says that for a finite, Borel measure on the unit sphere not be concentrated to a great subsphere of $\S^{n-1}$ there is a convex body $K$ for which the surface area measure $S_{n-1}(K,\cdot)$ is the given one. Hence if $K$ is a convex body in $\mathbb{R}^n$ and $0\leq t\leq 1$, then there is a unique convex body whose surface area measure is $(1-t)S_{n-1}(K,\cdot)+tS_{n-1}(-K, \cdot)$. The connection between the brightness function and the surface area measure of $K$ is the following: The condition that for all $u\in \S^{n-1}$ $\vol_{n-1}(K_1|u^\perp)=\vol_{n-1}(K_2|u^\perp)$ is equivalent to that for all $u\in \S^{n-1}$ $S_{n-1}(K_1,\cdot)+S_{n-1}(-K_1,\cdot)=S_{n-1}(K_2,\cdot)+S_{n-1}(-K_2,\cdot)$ (see Theorem 3.3.2 in \cite{gardner}). This leads to another important observation since for all $0\leq t\leq 1$ holds
$$
((1-t)S_{n-1}(K,\cdot)+tS_{n-1}(-K, \cdot))+((1-t)S_{n-1}(-K,\cdot)+tS_{n-1}(K, \cdot))=
$$
$$
S_{n-1}(K,\cdot)+S_{n-1}(-K,\cdot),
$$
the unique convex body whose surface area measure is $(1-t)S_{n-1}(K,\cdot)+tS_{n-1}(-K, \cdot)$ has the same brightness function as of $K$. This also implies that their projection bodies are the same centred convex body.

The \emph{Blaschke body} $\triangledown K$ corresponds to $t=1/2$. The term \emph{projection class} of $K$ is used for the class of all convex bodies $K'$ for which $\Pi K=\Pi K'$. $\triangledown K$ is the unique body in a projection class with the largest volume. (see Theorem 4.1.3 and Theorem 3.3.9 in \cite{gardner}). From the proof (applying Minkowski's first inequality, B.11 in \cite{gardner}) follows that the volume of $K$ is equal to the volume of $\triangledown K$ if and only if $K$ is centrally symmetric. This means that the only centrally symmetric element of a projection class is the Blaschke body. In Figure \ref{fig:blaschkebody} we drew the Blaschke body of the regular tetrahedron. The surface are measure of $-K$ is the reflected image of the surface are measure of $K$. Hence the surface area measure of the Blaschke body is concentrated to $8$ points with equal masses situated at the outward unit vectors of the facets of a regular octahedron, hence the body $\triangledown K$ is that regular octahedron which is the intersection of the regular tetrahedron $K$ and its reflected image $-K$. (Here the origin is the common centroid of the two tetrahedra.)

\begin{figure}
\centering
\includegraphics[scale=0.8]{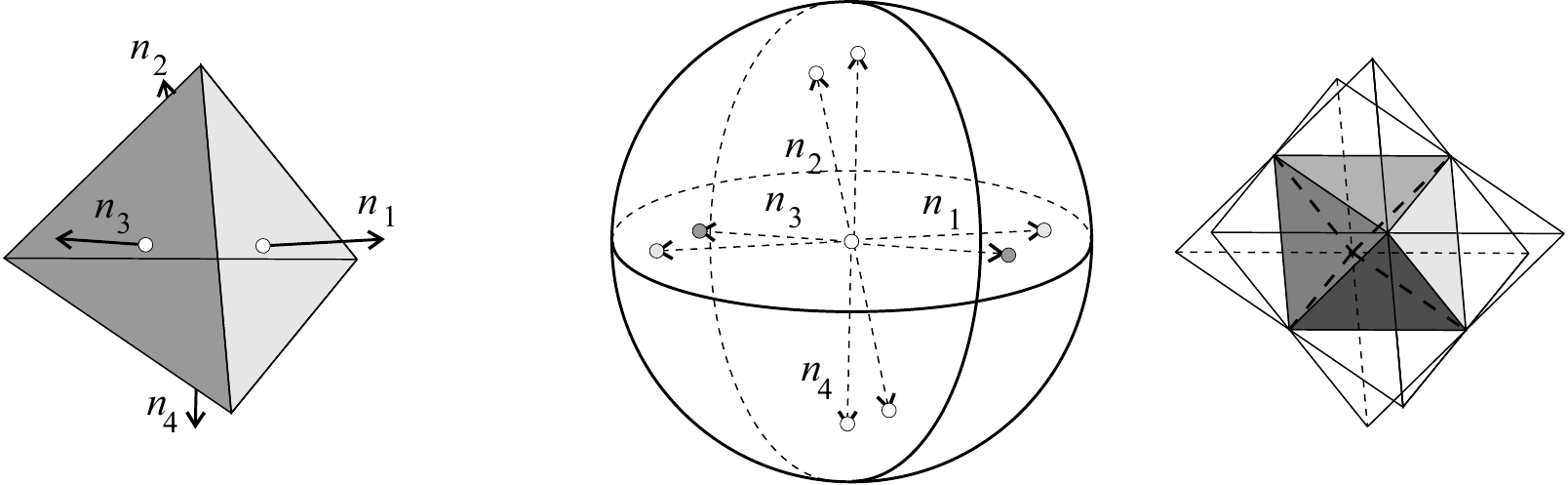}
\caption{The Blaschke body of the regular tetrahedron}
\label{fig:blaschkebody}
\end{figure}

In dimension $3$ a convex body has constant brightness if its shadow on any plane always has the same area. Among centrally symmetric convex bodies the ball is the only with constant brightness (Theorem 3.3.11 in \cite{gardner}). Blaschke constructed the first example of nonspherical convex body in $\R^3$ of constant brightness (see \cite{blaschke-book}). He observed that it can be found such body in the class of solids of revolution with the $z$-axis as axis of revolution. We shortly write this nice construction. It is based on the fact that a convex body of class $C^2_+$ has constant brightness if and only if the sum of the products of the principal radii at antipodal points is constant (see Theorem 3.3.14 in \cite{gardner}). He described the $2$-dimensional meridian section of the body $K$ which lies on the $\{x,z\}$-plane. Let $x_u$ be the point of this meridian where the outer normal vector is $u$. The directions corresponding to the principal radii of curvature are $\{u_1,u_2\}$. We take $u_1$ in the plane of $\{u,z\}$, and orthogonal to $u$. $u_2$ is orthogonal to the plane $\{u,u_1\}$. The second principal radius $R_2$ is the distance from $x_u$ to the $z$-axis measured along the line through $x_u$ parallel to $u$. We need
$$
R_1(u)R_2(u)+R_1(-u)R_2(-u)=\mathrm{const.}=c
$$
for all $u$. We can see in Figure \ref{fig:blaschkeconstbrightness} the examined meridian. Note that the base point of one of two opposite normal vectors is in a vertex of this meridian. Hence the smooth parts have to be of constant curvature with the same value (in a non-smooth point the curvature is infinity giving that one of the principal radiuses is equal to zero). For a surface of revolution of form
$$
F(x,\varphi)=\left(x\cos \varphi, x\sin\varphi, z(x)\right),
$$
the Gaussian curvature $K$ is known it is
$$
\frac{1}{R_1(u)R_2(u)}=K=\frac{\dot{z}\ddot{z}}{x(1+\dot{z}^2)^2}.
$$
By the substitution $t={\cdot z}^2, \dot{t}=2\dot{z}\ddot{z}$ from this in the case of $K\ne 0$ we get the differential equation
$$
\frac{\dot{t}}{(1+t)^2}=2Kx.
$$
The general solution for $t$ is:
$$
\dot{z}^2=t=-\frac{1+c+Kx^2}{c+Kx^2},
$$
where $c$ is an arbitrary constant. In our case of $K=1$ we also know the initial condition: $\dot{z}(0)=-1$. Hence $c=-1/2$, and $z$ is
$$
z(x)=\pm \int\limits_{0}^x \sqrt{\frac{1+2x^2}{1-2x^2}}\mathrm{d}x+C,
$$
with another constant $C$. Substitute $\cos v=\sqrt{2}x$ in this non-elementary integral and we get
$$
z(\frac{\cos v}{\sqrt{2}}=\mp \frac{1}{\sqrt{2}}\int\limits_{\pi/2}^{\cos^{-1}\sqrt{2} x} \sqrt{1+\cos^2v}\mathrm{d}v+C=\pm \frac{1}{\sqrt{2}}\int\limits^{\pi/2}_{\cos^{-1}\sqrt{2} x} \sqrt{2-\sin^2v}\mathrm{d}v+C
$$
The initial condition now is $z(\sqrt{2}/2)=0$, hence
$$
C=\mp \frac{1}{\sqrt{2}}\int\limits^{\pi/2}_{0} \sqrt{2-\sin^2v}\mathrm{d}v.
$$
Since $z(v)\geq 0$ and $\frac{1}{\sqrt{2}}\int\limits^{\pi/2}_{0} \sqrt{2-\sin^2v}\mathrm{d}v\geq \pm \frac{1}{\sqrt{2}}\int\limits^{\pi/2}_{\cos^{-1}\sqrt{2} x} \sqrt{2-\sin^2v}\mathrm{d}v$ we have to choose the positive sign for $C$. Hence the only possibility is
$$
z\left(\frac{\sqrt{2}}{2}\cos v\right)=\frac{\sqrt{2}}{2}\left(\int\limits^{\pi/2}_{0} \sqrt{2-\sin^2v}\mathrm{d}v-\int\limits^{\pi/2}_{\cos^{-1}\sqrt{2} x} \sqrt{2-\sin^2v}\mathrm{d}v\right)=
$$
$$
\frac{\sqrt{2}}{2}\int\limits_{0}^{\cos^{-1}\sqrt{2} x} \sqrt{2-\sin^2v}\mathrm{d}v,
$$
which is the meridian was proposed by Blaschke in \cite{blaschke-book}.
\begin{figure}
\centering
\includegraphics[scale=0.8]{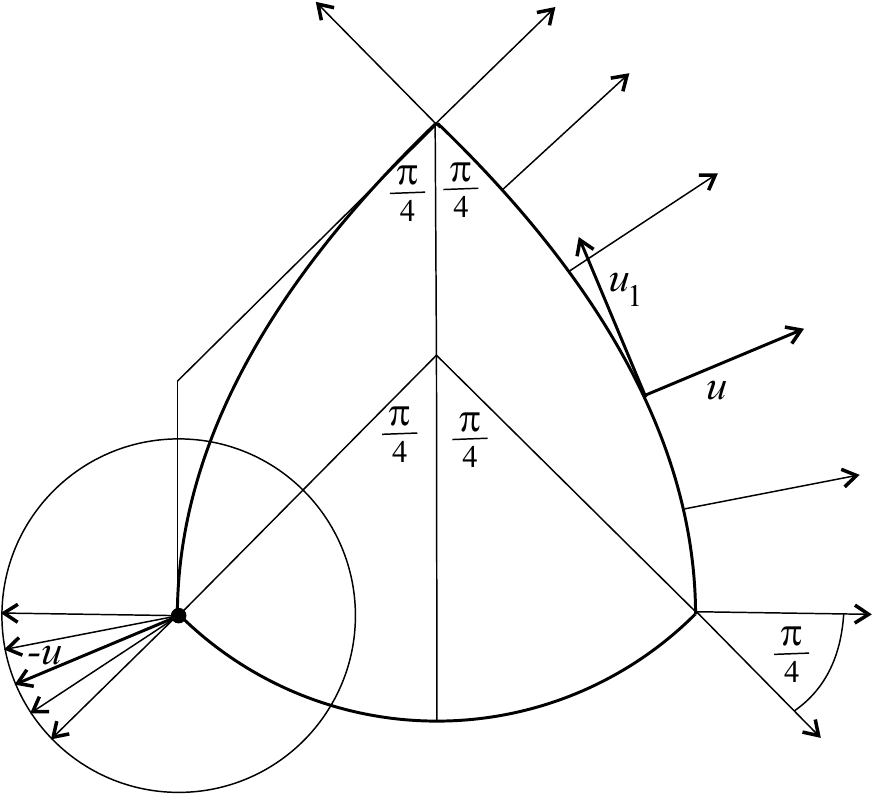}
\caption{A nonspherical body of constant brightness}
\label{fig:blaschkeconstbrightness}
\end{figure}

A nice problem arose in the common investigation of width and brightness. In the plane the two concepts essentially are the same and give the same information on the body. In $3$-space, this is not true so we have the following question: \emph{ Is every convex body in $\R^3$ of constant width and constant brightness is a ball?} The positive answer for $C^2$ bodies was given by Nakajima in 1926 \cite{nakajima}, he showed that any convex body in $\R^3$ with constant width, constant brightness, and boundary of class $C^2$ is a ball. In 2005, Howard proved in \cite{howard} that the regularity assumption on the boundary is unnecessary, so balls are the only convex bodies of constant width and brightness.

Turning back to paper of Campi at all,  we find the following problems:
\begin{problem}[Campi-Colesanti-Gronchi]
Find a convex body of minimum volume in each of the following classes:

C) The class of convex bodies with constant brightness $b$;

D) The class of convex bodies with minimal brightness $b$.
\end{problem}

To get a partial solution of problem C) the authors proved the following

\begin{theorem}[Campi-Colesanti-Gronchi, 1996]\label{thm:c-c-constbrightness}
Let $K$ be a body of minimum volume in the class of all convex bodies having constant brightness $b$. Then the surface area measure $S_2(K,\cdot)$ of $K$ has the following form
\begin{equation}\label{eq:samofminbrightness}
S_2(K,H) = \int\limits_{H} s_K(z)\mathrm{d}z, \quad  H \mbox{ is a Borel set of } \S^2
\end{equation}
where $s_K(\cdot)$ is a non-negative function in $L_1(\S^2)$ such that
\begin{eqnarray}\label{eqn:densityfunc}
  s_K(z)+s_K(-z)& = & -\frac{2b}{\pi} \quad \mbox{almost ewerywhere in} \quad \S^2 \\
  s_K(z)s_K(-z) & = & 0 \quad \mbox{almost ewerywhere in} \quad \S^2 .
\end{eqnarray}
\end{theorem}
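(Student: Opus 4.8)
The plan is to reduce the problem to an optimization over a single projection class and then to read off the three conclusions. First I would note that constant brightness forces $\Pi K$ to have constant support function $b$, so $\Pi K$ is the centred ball of radius $b$. The Blaschke body $\triangledown K$ lies in the same projection class and is centrally symmetric, and a centrally symmetric body of constant brightness is a ball (Theorem 3.3.11 in \cite{gardner}); comparing brightnesses, $\triangledown K$ is the ball of radius $\sqrt{b/\pi}$, whose surface area measure is $(b/\pi)\,\sigma$, where $\sigma$ is spherical Lebesgue measure. Since $S_2(\triangledown K,\cdot)=\tfrac12\bigl(S_2(K,\cdot)+S_2(-K,\cdot)\bigr)$, this gives
\[
S_2(K,\cdot)+S_2(-K,\cdot)=\frac{2b}{\pi}\,\sigma .
\]
In particular the even combination is absolutely continuous; as $S_2(K,\cdot)\le S_2(K,\cdot)+S_2(-K,\cdot)$, the measure $S_2(K,\cdot)$ is itself absolutely continuous, with a density $s_K\ge 0$ in $L_1(\S^2)$, and, using $S_2(-K,H)=S_2(K,-H)$, the density satisfies $s_K(z)+s_K(-z)=2b/\pi$ almost everywhere (so the constant appearing in the statement equals $2b/\pi$). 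This already yields the absolute continuity, the membership in $L_1$, and the first identity.

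It remains to prove the orthogonality relation $s_K(z)s_K(-z)=0$, which is where minimality must be used. I would phrase the minimum-volume problem as the minimization of the volume functional over the convex set $\mathcal A$ of all finite nonnegative Borel measures $S$ on $\S^2$ with $S+\check S=\tfrac{2b}{\pi}\sigma$ and centroid $\int_{\S^2}u\,\mathrm dS(u)=0$, where $\check S$ is the reflected measure $\check S(H)=S(-H)$. By Minkowski's existence and uniqueness theorem each such $S$ is the surface area measure of a convex body, unique up to translation, so $\mathcal A$ parametrizes exactly the projection class of $K$, and the problem becomes the minimization of $V(S):=\vol(K_S)$ over $\mathcal A$. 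The key analytic input is that $V^{2/3}$ is concave along the linear structure of $\mathcal A$: this is the Kneser--S\"uss inequality, the strong form of the fact quoted earlier that the Blaschke body (the midpoint of $S$ and $\check S$) maximizes volume in a projection class. Moreover equality in Kneser--S\"uss forces the two bodies to be homothetic, and two distinct members of $\mathcal A$ can only be homothetic with ratio $1$ (a ratio $\lambda$ would give $\lambda^2\sigma=\sigma$); hence $V^{2/3}$ is strictly concave on $\mathcal A$, and if a minimizer were a nontrivial convex combination $\tfrac12(S_0+S_1)$ of distinct points its value would strictly exceed $\min(V(S_0),V(S_1))$, a contradiction. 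Thus every minimizer is an extreme point of $\mathcal A$.

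The final step is to identify these extreme points: because $\sigma$ is atomless and the only constraints beyond the pointwise relation $s(z)+s(-z)=2b/\pi$ are the finitely many moment equations $\int_{\S^2}u\,\mathrm dS(u)=0$, a Lyapunov-type bang-bang argument shows that at an extreme point the density takes only the values $0$ and $2b/\pi$ almost everywhere; equivalently $s_K(z)s_K(-z)=0$ a.e., which is the remaining assertion. I expect the main obstacle to be precisely this extreme-point analysis. One must (a) make rigorous the correspondence between admissible measures and genuine convex bodies, checking that weak-$*$ limits stay in $\mathcal A$ so that a minimizer exists; (b) handle the three linear moment constraints in the bang-bang principle, via Lyapunov's convexity theorem for vector measures, so that they do not create a positive-measure set on which the density is two-valued-strictly-between $0$ and $2b/\pi$; and (c) treat the equality case of Kneser--S\"uss with enough care to conclude that \emph{every} minimizer, and not merely some minimizer, is extreme.
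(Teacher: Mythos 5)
The survey states this theorem as a cited result of Campi--Colesanti--Gronchi and gives no proof of it, so there is no in-paper argument to compare yours against; I can only assess your reconstruction on its own terms, and it is essentially sound (and, as far as I can tell, follows the strategy of the original paper). Your first half is clean and correct: constant brightness plus Cauchy's projection formula forces $S_2(K,\cdot)+S_2(-K,\cdot)=\tfrac{2b}{\pi}\sigma$, whence absolute continuity, $s_K\in L_1(\S^2)$, $s_K\ge 0$, and $s_K(z)+s_K(-z)=2b/\pi$ a.e. --- you are right that the minus sign in the statement as printed is a typo, since $s_K$ is declared non-negative, and right that minimality is not needed for this part. The second half is also structurally correct: the Kneser--S\"uss inequality makes $V^{2/3}$ concave along the affine structure of the admissible set $\mathcal A$, the equality case plus the normalization $S+\check S=\tfrac{2b}{\pi}\sigma$ rules out nontrivial homotheties, so every minimizer is an extreme point, and extreme points are bang-bang. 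Two points deserve explicit care beyond what you wrote. First, in the bang-bang step the perturbation $h$ supported on $\{\delta\le s_K\le 2b/\pi-\delta\}$ must be taken \emph{odd} ($h(-z)=-h(z)$) so that the antipodal constraint $s(z)+s(-z)=2b/\pi$ is preserved; note that for odd $h$ the centroid integrals $\int_{\S^2}u_ih(u)\,\mathrm d\sigma(u)$ do \emph{not} vanish automatically (the integrands are even), so the three moment conditions are genuine constraints and you need the space of admissible odd perturbations to have dimension at least $4$ --- which it does, since the set in question is symmetric and of positive measure. Second, you should record why no $S\in\mathcal A$ is concentrated on a great subsphere (if it were, so would $S+\check S=\tfrac{2b}{\pi}\sigma$, which is false), so that Minkowski's existence theorem applies and every element of $\mathcal A$ really is the surface area measure of a full-dimensional body, as Kneser--S\"uss requires. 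With those details supplied, the argument is complete.
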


This theorem leads to further conjectures:

\begin{conj}[Campi-Colesanti-Gronchi on Problem C:]
Consider three pairwise orthogonal great circles on $\S^2$, they determine eight open regions. Define $s_K$ as a piecewise constant function whose values are $0$ and $2b/\pi$ on those regions alternately. Let $K$ be the unique convex body such that $s_K$ is the distribution of its surface area measure. Clearly, $K$ has constant brightness $b$, and the Gaussian curvature is $\pi/2b$ at each regular point of $K$.
\end{conj}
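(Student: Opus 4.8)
The plan is to split the conjecture into the verifiable structural assertions of its closing sentence --- that the prescribed density $s_K$ really is the surface area density of a convex body of constant brightness $b$ whose regular points have Gaussian curvature $\pi/(2b)$ --- and the genuinely open optimality claim signalled by the heading ``on Problem C,'' namely that this $K$ minimises volume among all constant-brightness-$b$ bodies. I would prove the former in full and isolate the latter as the obstacle.

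First I would confirm that the recipe yields a body at all. Let $\mu$ be the measure with density $s_K$ against spherical Lebesgue measure. Minkowski's existence theorem asks that $\mu$ be finite, not concentrated on a great subsphere, and have vanishing first moment. Finiteness is clear because $s_K$ takes only the values $0$ and $2b/\pi$, and the support is a union of four open octants, so $\mu$ cannot live on a great circle. For $\int_{\S^2} v\,d\mu(v)=0$ I would observe that the four octants carrying the value $2b/\pi$ contain the vertices of a regular tetrahedron inscribed in the cube; a sign count octant by octant shows that each Cartesian coordinate of the first moment cancels in pairs (equivalently, the moment is fixed by the irreducible action of the tetrahedral rotation group on $\R^3$ and hence vanishes). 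Minkowski's theorem then delivers a convex body $K$, unique up to translation, with $S_2(K,\cdot)=\mu$.

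Constant brightness is the main computation, and the key is that Cauchy's projection formula (\ref{eq:Cauchy'sproj}) responds only to the even part of the surface area measure. I would write $s_K(v)=\tfrac{b}{\pi}+\tfrac{b}{\pi}\sigma(v)$ with $\sigma(v)\in\{-1,+1\}$; since antipodal octants carry opposite values, $\sigma$ is antipodally odd, which is exactly the relation $s_K(v)+s_K(-v)=2b/\pi$. The kernel $|\langle u,v\rangle|$ is even in $v$, so the odd summand integrates to zero and
\[
b_K(u)=\frac12\int_{\S^2}|\langle u,v\rangle|\,s_K(v)\,dv=\frac{b}{2\pi}\int_{\S^2}|\langle u,v\rangle|\,dv=b
\]
for every $u$, using $\int_{\S^2}|\langle u,v\rangle|\,dv=2\pi$. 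Hence $h_{\Pi K}\equiv b$, so $\Pi K$ is a ball and $K$ has constant brightness $b$. For the curvature claim, at a regular ($C^2_+$) boundary point with outer normal $v$ the density of $S_2(K,\cdot)$ equals the product $R_1(v)R_2(v)$ of principal radii, i.e.\ the reciprocal Gaussian curvature; such a normal lies in a support octant where the density is $2b/\pi$, giving Gaussian curvature $\pi/(2b)$ there.

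The hard part, and the reason the statement is a conjecture rather than a theorem, is optimality. The preceding theorem guarantees only that \emph{some} minimiser has a density satisfying $s_K(v)+s_K(-v)=2b/\pi$ and $s_K(v)s_K(-v)=0$ almost everywhere, and these two pointwise constraints are met by an enormous family of ``checkerboard'' partitions of $\S^2$, of which the three-orthogonal-circle pattern is only one. I do not expect an easy route here: there seems to be no symmetrisation or shaken-body argument, of the kind that settled the width and thickness problems for surfaces of revolution, that forces the octant partition to beat every competing admissible density. Deciding whether it does is exactly where the difficulty sits.
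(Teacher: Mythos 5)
This statement is labelled a \emph{conjecture} in the paper, and the paper supplies no proof of it: the only ``argument'' offered is the word ``Clearly'' attached to the constant-brightness and curvature assertions, and the optimality claim implicit in the heading (that this $K$ solves Problem~C) is precisely what is open. So there is no proof in the paper to compare yours against; what can be judged is whether your verification of the checkable part is sound and whether you have correctly located the open part. On both counts you are right. Your decomposition is the appropriate one. The Minkowski-existence check is correct: the four octants of even sign parity are pairwise non-antipodal, their union is not contained in any great circle, and the first moment vanishes coordinatewise by the reflection symmetries you cite (or by irreducibility of the tetrahedral action). The constant-brightness computation via the even/odd splitting $s_K=\tfrac{b}{\pi}(1+\sigma)$ with $\sigma$ antipodally odd, together with $\int_{\S^{2}}|\langle u,v\rangle|\,\mathrm{d}v=2\pi$, gives $b_K\equiv b$ exactly as the paper's Cauchy projection formula requires, and the curvature statement at regular points is the standard identification of the density of $S_2(K,\cdot)$ with the reciprocal Gaussian curvature. (Incidentally, your computation silently corrects the sign error in the paper's displayed constraint $s_K(z)+s_K(-z)=-\tfrac{2b}{\pi}$, which must of course read $+\tfrac{2b}{\pi}$.) Finally, your diagnosis of the gap is accurate: the Campi--Colesanti--Gronchi theorem preceding the conjecture only constrains the density of \emph{a} minimiser to satisfy the two pointwise conditions, which admit a large family of admissible ``checkerboard'' partitions of $\S^{2}$; nothing in the paper, nor any known symmetrisation argument, singles out the three-orthogonal-great-circle pattern. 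That part remains open, and no referee should expect you to close it here.
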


\begin{conj}[Campi-Colesanti-Gronchi on Problem C when the body is a body of revolution:]
The body was constructed by Blaschke has minimum volume among all convex bodies of revolution with constant brightness $b$.
\end{conj}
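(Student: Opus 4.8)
The plan is to attack this conjecture by first reducing it, via the structure theorem already available, to a one-dimensional optimization over rotationally symmetric profiles, and then by singling out Blaschke's meridian as the unique minimizing profile. A minimizer $K_0$ exists in the class of bodies of revolution of constant brightness $b$ by the same compactness argument invoked for Problems A--D. Since $K_0$ is a body of revolution about, say, the $z$-axis, its surface area measure $S_2(K_0,\cdot)$ is invariant under rotations about that axis, so its density $s_{K_0}$ depends only on the polar angle $\theta\in[0,\pi]$ measured from the axis. I would then rerun the variational argument underlying Theorem~\ref{thm:c-c-constbrightness}, but restricting the admissible perturbations to rotationally symmetric ones: perturbing the curvature in a $\theta$-band while compensating in the antipodal band keeps the brightness constant (because constant brightness is exactly the condition $S_2(K,\cdot)+S_2(-K,\cdot)=\text{const}\cdot d\sigma$), while the first variation of volume is linear in the perturbation. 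The bang--bang conclusion of Theorem~\ref{thm:c-c-constbrightness} therefore survives the restriction: $s_{K_0}(\theta)\in\{0,\,2b/\pi\}$ with $s_{K_0}(\theta)s_{K_0}(\pi-\theta)=0$ almost everywhere. Hence $K_0$ is determined by a measurable ``switching set'' $E\subset[0,\pi]$ on which the Gaussian curvature equals $\pi/(2b)$, subject to the antipodal complementarity $\theta\in E\iff \pi-\theta\notin E$ and to the Minkowski closing condition $\int_0^\pi\cos\theta\,\sin\theta\,s_{K_0}(\theta)\,d\theta=0$ that guarantees the measure is a genuine surface area measure.

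Next I would translate this data into an explicit volume functional of $E$. On every maximal arc where $s_{K_0}=2b/\pi$ the meridian solves exactly the Blaschke ordinary differential equation derived above (with $K=\pi/(2b)$ in place of $K=1$), so up to the integration constants the smooth pieces of $\bd K_0$ are rigid; the complementary set, where $s_{K_0}=0$, contributes only ridge circles or the apex on the axis. Writing the meridian as $x=x(z)$ and using $\vol_3(K_0)=\pi\int x(z)^2\,dz$ together with the axial normalisation $b=\pi x_{\max}^2$, the free parameters reduce to the positions of the finitely many switches. I would then try to show that minimising this volume forces $E$ to be a single interval adjacent to $\theta=\pi/2$, producing a meridian that is one smooth constant-curvature arc closed off by a single ridge vertex, which is precisely Blaschke's profile; one also checks directly that Blaschke's body meets all the necessary conditions, the ridge vertex accounting for the $s_K=0$ part and the initial slope $\dot z(0)=-1$ matching the closing condition.

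The hard part will be precisely the step I have stated most glibly: proving that the single-switch Blaschke configuration is a \emph{global} minimizer rather than merely a stationary one. There is no evident brightness-preserving rearrangement playing the role that the shaken body played for the width problems in Theorems~\ref{thm:c-c-gconstantwidth} and~\ref{thm:c-c-ggiventhickness}, because brightness is an $(n-1)$-dimensional datum and is far less stable under symmetrisation than the one-dimensional width; one cannot simply ``shake'' the body and monitor brightness in every direction simultaneously. Consequently one must either rule out, by a direct estimate of $\vol_3$, all admissible switching sets $E$ with several components, or exhibit a genuine rearrangement that lowers volume while holding $S_2(K,\cdot)+S_2(-K,\cdot)$ fixed. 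I expect this global comparison, together with the exclusion of multi-ridge competitors, to be the essential obstacle, and it is presumably the reason the statement is still only a conjecture.
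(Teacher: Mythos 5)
The statement you are addressing is presented in the paper as an open conjecture; the paper offers no proof of it, only the supporting observation that Blaschke's body arises from a particular bang--bang density $s_K$ on $\Sph^2$ (values $0$ and $2b/\pi$ on four zones cut out by an equator and the two parallels at spherical distance $\pi/4$). Your text is therefore correctly labelled a \emph{proposal}: it is a programme, not a proof, and you say so yourself. The genuine gap is exactly the one you flag in your last paragraph. The first variation argument behind Theorem~\ref{thm:c-c-constbrightness} (even granting that it can be rerun inside the rotationally symmetric class, where the admissible perturbations are fewer and the necessary conditions correspondingly weaker) only yields that a minimizer has a bang--bang curvature density with the antipodal complementarity $s_K(z)s_K(-z)=0$. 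It does not select among the infinitely many switching sets $E\subset[0,\pi]$ compatible with that condition and with the Minkowski closing condition, and no brightness-preserving symmetrization analogous to the shaken body is known that would force $E$ to be a single band about the equator. Until that global comparison is supplied, nothing in your argument distinguishes Blaschke's single-ridge profile from a multi-ridge competitor of possibly smaller volume, so the conjecture remains exactly as open as the paper states.

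Two smaller technical cautions if you pursue this. First, the identity you rely on, that constant brightness is equivalent to $S_2(K,\cdot)+S_2(-K,\cdot)$ being a constant multiple of spherical Lebesgue measure, is the correct mechanism (via Cauchy's projection formula and the injectivity of the cosine transform on even measures), but your compensating perturbation must preserve not only this sum but also the vector-valued closing condition $\int_{\Sph^2}u\,\mathrm{d}\mu(u)=0$; for a band and its antipodal band these two constraints interact, so the class of admissible variations is smaller than you suggest. Second, the normalisation $b=\pi x_{\max}^2$ presumes the maximal radius is attained in the direction of the axis of revolution and that the corresponding shadow is a disk of that radius, which needs justification for a general competitor; the correct normalisation comes from evaluating the brightness in the axial direction, and for a non-smooth competitor the extremal meridian need not reach the curve $x=x_{\max}$ smoothly.
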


To support this conjecture they noticed that Blaschke's body of constant brightness can be constructed on the following manner: Fix
on $\S^2$ a great circle as an equator and take the pair of parallel circles at a spherical distance $\pi/4$ from it. In such a way $\S^2$ is divided into four open regions. Define $s_K$ to be a piecewise constant function whose values are $0$ and $2b/\pi$ on those regions alternately; $s_K$
is the distribution of the area measure of the convex body of revolution with constant brightness constructed by Blaschke in \cite{blaschke-book}.

We note that problem D) is also open recently, even the regular simplex is a strong candidate to solve it.

\section{The convex-hull function}

The convex-hull function of a convex body $K$ is the "dual" of the covariogram function. It was examined for a long time, too. The first paper in connection with it is written in 1950 due to Fary and R\'edei \cite{fary-redei}. They proved that if one of the bodies moves on a line with constant velocity then the volume of the convex hull is a convex function of the time (see Satz.4 in \cite{fary-redei}).  The definition in general is the following:

\begin{defi}
Let $K$ be an $n$-dimensional convex compact body and for a translation vector $t\in \mathbb{R}^n$ associate the value ${G}_K(t)=\vol\mathrm{conv}\{K\cup (K+t)\}$. We call this function the \emph{convex-hull function}, associated to the body $K$.
\end{defi}

In dimension $2$ this statement says that an intersection of the graph of the convex-hull function with a plane is parallel to the $z$-axis is a convex function. Later several paper connected to this observation giving a lot of interesting problem (see the survey \cite{gho-surveyonconvhullvolume} and the references therein). It is an interesting fact, that the "basic problem" on the convex-hull function whether \emph{the convex-hull function determines or doesn't determine the body $K$} was asked only in the close past by \'A. Kurusa.

To answer this question we have to collect some general information on the convex-hull function. Obviously ${G }_K(0)=\vol (K)$ and its support is the whole $\mathbb{R}^n$. Denote by $u$ a unit vector and (as in the previous sections) by $K|u^\perp$ the orthogonal projection of $K$ onto a hyperplane with normal vector $u$.

\begin{lemma}
Let $\alpha$ arbitrary real number and $u\in \S^{n-1}$ unit vector in $\mathbb{R}^n$. Then we have
 \begin{equation}\label{eq:basicid}
 G_K(\alpha u)=\vol_n(K)+|\alpha|\vol_{n-1}(K|u^\perp),
 \end{equation}
consequently the function $u\mapsto G_K(u)-\vol_{n}(K)$ is the brightness function of $K$. Conversely, the brightness function of $K$ and the value $\vol_n K$ determine the convex-hull function of $K$.
\end{lemma}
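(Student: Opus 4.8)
The plan is to reduce the identity to a single slicing computation. First I would dispose of the sign. For $\alpha<0$ we have $\alpha u=|\alpha|(-u)$, and since $(-u)^\perp=u^\perp$ the quantity $\vol_{n-1}(K|u^\perp)$ is unchanged when $u$ is replaced by $-u$; hence it suffices to establish \eqref{eq:basicid} for $\alpha\ge 0$ and an arbitrary unit direction, which I assume henceforth.

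The key geometric observation is the sweep description $\conv\{K\cup(K+\alpha u)\}=\bigcup_{\mu\in[0,1]}(K+\mu\alpha u)$. This follows from the general identity $\conv(A\cup B)=\bigcup_{\lambda\in[0,1]}(\lambda A+(1-\lambda)B)$ together with the convexity fact $\lambda K+(1-\lambda)K=K$ for $\lambda\in[0,1]$: taking $B=K+\alpha u$ collapses the two-parameter family to the one-parameter family of translates of $K$ along the segment $[0,\alpha u]$.

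Next I would slice along $u$. Decompose $\R^n=u^\perp\oplus\R u$, writing a point as $y+su$ with $y\in u^\perp$. For $y$ in the projection $K|u^\perp$ the fibre $\{s:y+su\in K\}$ is, by convexity, a segment $[f(y),g(y)]$, and Fubini gives $\vol_n(K)=\int_{K|u^\perp}\bigl(g(y)-f(y)\bigr)\,\mathrm{d}y$. I then compute the fibre of the convex hull over the same $y$. Orthogonal projection onto $u^\perp$ commutes with convex hulls and is unaffected by the translation $\alpha u$, so the hull projects exactly onto $K|u^\perp$; over each such $y$ the hull's fibre is an interval because the hull is convex, it is contained in $[f(y),g(y)+\alpha]$ because each translate $K+\mu\alpha u$ contributes only the segment $[f(y)+\mu\alpha,\,g(y)+\mu\alpha]$, and it attains both endpoints, namely $f(y)$ at $\mu=0$ and $g(y)+\alpha$ at $\mu=1$. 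Hence the fibre is exactly $[f(y),g(y)+\alpha]$, of length $\bigl(g(y)-f(y)\bigr)+\alpha$. Integrating over $K|u^\perp$ splits into $\vol_n(K)+\alpha\,\vol_{n-1}(K|u^\perp)$, which is \eqref{eq:basicid}. The two remaining assertions are then immediate: putting $\alpha=1$ gives $G_K(u)-\vol_n(K)=\vol_{n-1}(K|u^\perp)=b_K(u)$ in the sense of \eqref{def:brightnessfunction}; conversely, for $t\ne 0$ one writes $t=|t|\,(t/|t|)$ and reads \eqref{eq:basicid} as $G_K(t)=\vol_n(K)+|t|\,b_K(t/|t|)$, while $G_K(0)=\vol_n(K)$, so $\vol_n(K)$ and the brightness function recover $G_K$ everywhere.

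The only point requiring care—the main obstacle—is the fibre identification. One must verify both that the hull contributes nothing below $f(y)$ or above $g(y)+\alpha$ (the containment, which is exactly where the hypothesis $\alpha\ge 0$ and the sweep description enter) and that these two extreme values are genuinely attained, so that convexity of the fibre forces it to be the entire interval rather than a proper subinterval. Once this is settled, everything else is an application of Fubini's theorem.
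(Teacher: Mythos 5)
Your argument is correct and is essentially the paper's own proof written out in full: the paper disposes of \eqref{eq:basicid} with the single remark that it is ``an easy consequence of Cavalieri principle,'' and your Fubini computation over the fibres $y+\R u$, combined with the sweep description $\conv\{K\cup(K+\alpha u)\}=\bigcup_{\mu\in[0,1]}(K+\mu\alpha u)$, is precisely that Cavalieri argument made explicit. The two remaining assertions are handled in the same immediate way in both versions.
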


\begin{proof}
The equality (\ref{eq:basicid}) is an easy consequence of Cavalieri principle (see also in Section A.5 in \cite{gardner}). The second statement follows from the definition of the brightness function. The third statement is an immediate consequence of the equality \ref{eq:basicid}.
\end{proof}

\begin{remark}
In the case, when the brightness function ok $K$ determines the body $K$, the convex-hull function of $K$, too. On the other hand, we discussed earlier that there are several bodies with the same brightness function. The only question is that if the volume and also the brightness function are known that the body is determined or isn't. Since the covariogram function also holds this two properties in dimension $n$ for $n\geq 4$ the answer should be "no".
We know that on the plane there are two convex polygons with the same width function and also have the same volume. To get an example take two congruent regular hexagon and add (in an suitable manner) to three congruent suitable equilateral triangles each of them that the getting nine-sided polygons won't be congruent to each other. (In Figure \ref{fig:samevolumeandsamewidth} we can see the two nine-sided polygons which are non-congruent parts of a regular twelve-sided polygon.) We might observe that the support functions $h_{P}$ and $h_Q$ of $P$ and $Q$, has the property that $\{h_P(u),h_P(-u)\}=\{h_Q(u),h_Q(-u)\}$. Hence $w_P(u)=w_Q(u)$ for all $u$, furthermore the width function is a rotated copy of the brightness function, hence this example holds the two required properties.

\begin{figure}[ht]
\centering
\includegraphics[scale=0.8]{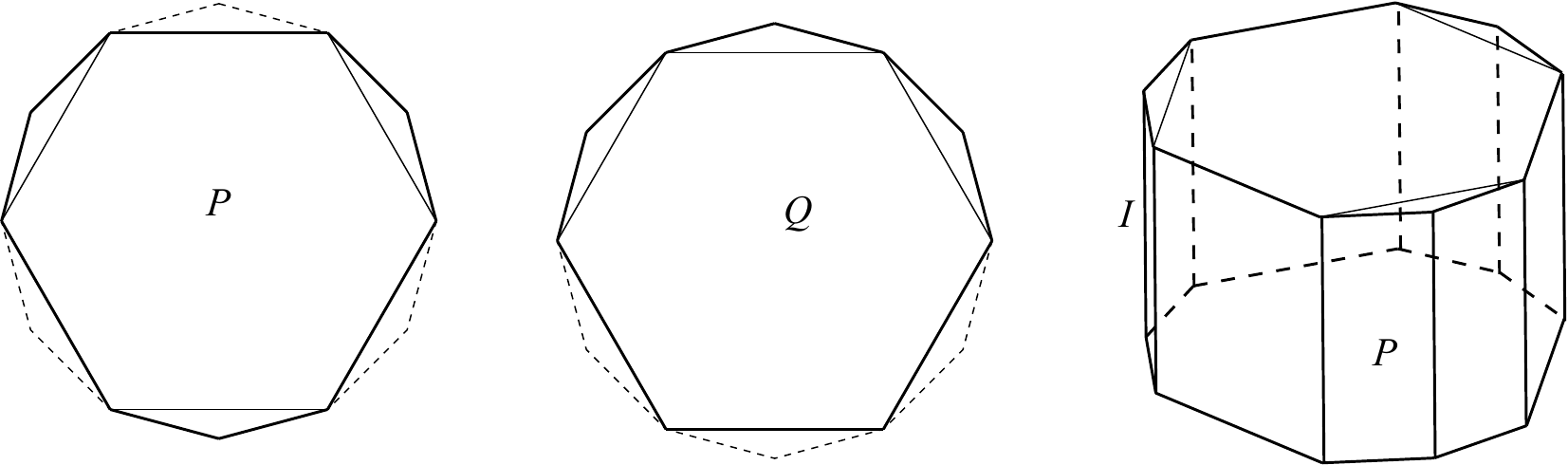}
\caption{Non-congruent polygons with the same brightness function and with the same volume}
\label{fig:samevolumeandsamewidth}
\end{figure}
\end{remark}

From these polygons $P$ and $Q$, we can construct a $3$-dimensional pair of polyhedrons with the same brightness function and with the same volume. Consider the prism $H_P:=P\times I$ and $H_Q:=Q\times I$ where $I$ is a segment orthogonal to the common plane $H$ of $P$ and $Q$. By the following theorem, these prisms have the required properties.

\begin{theorem}\label{prop:samebrightnessandvolume}
Let $P$ and $Q$ be convex bodies of dimension $n$ with the same brightness function and equal volumes. Let $I$ be a segment orthogonal to that subspace $H$ of the $(n+1)$-dimensional Euclidean space which contains $P$ and $Q$. Then the prisms $H_P$ and $H_Q$ also have the same brightness function and also have equal $(n+1)$-dimensional volume.
\end{theorem}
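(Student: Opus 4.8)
The volume claim is the easy half, and I would dispose of it first. Choosing coordinates so that $H=\R^{n}\times\{0\}$ and $I=\{0\}\times[0,\ell]$ points along $e_{n+1}$, Cavalieri's principle gives $\vol_{n+1}(H_P)=\vol_n(P)\,\ell=\vol_n(Q)\,\ell=\vol_{n+1}(H_Q)$, using $\vol_n(P)=\vol_n(Q)$. So the real content lies in the brightness functions, and the plan is to produce an explicit formula for $b_{H_P}(w)$ that depends on $P$ only through the two quantities assumed equal for $P$ and $Q$. To that end I would fix a unit vector $w\in\S^{n}$ and write $w=\cos\alpha\,v+\sin\alpha\,e_{n+1}$ with $v\in\S^{n-1}\subset H$ and $\alpha\in[0,\pi/2]$, the values $\alpha=0$ (so $w\in H$) and $\alpha=\pi/2$ (so $w=\pm e_{n+1}$) being limiting cases that are checked directly.

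The structural observation driving the computation is that, inside $\R^{n+1}$, the prism is a Minkowski sum $H_P=P+I$ (with $P=P\times\{0\}$ and $I=\{0\}\times I$). Since orthogonal projection onto $w^{\perp}$ is linear it commutes with Minkowski addition, so the shadow splits as
$$
H_P\,|\,w^{\perp}=\bigl(P\,|\,w^{\perp}\bigr)+\bigl(I\,|\,w^{\perp}\bigr),
$$
that is, the Minkowski sum of the projected flat $A:=P\,|\,w^{\perp}$ (an $n$-dimensional body in $w^{\perp}$) and the projected segment $s:=I\,|\,w^{\perp}$. I would then apply the elementary formula for the volume of a body plus a segment, $\vol_n(A+s)=\vol_n(A)+\vol_{n-1}(A\,|\,s^{\perp})\,|s|$, with $s^{\perp}$ taken inside $w^{\perp}$, and evaluate the three ingredients in turn.

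The evaluation is where the geometric bookkeeping happens. First, $A=P\,|\,w^{\perp}$ is the image of the flat body $P\subset H$ under the orthogonal projection of the hyperplane $H$ onto the hyperplane $w^{\perp}$, a map that scales $n$-volume by $|\langle e_{n+1},w\rangle|=\sin\alpha$, so $\vol_n(A)=\sin\alpha\,\vol_n(P)$. Second, a short computation gives $|s|=\ell\,|e_{n+1}-\langle e_{n+1},w\rangle w|=\ell\cos\alpha$, with unit direction $\hat s=\cos\alpha\,e_{n+1}-\sin\alpha\,v$. The decisive ingredient is $\vol_{n-1}(A\,|\,s^{\perp})$: composing the projections $\R^{n+1}\to w^{\perp}\to(w^{\perp}\cap s^{\perp})$ is just orthogonal projection onto $W:=w^{\perp}\cap s^{\perp}$, and since $\{w,\hat s\}$ is an orthonormal pair spanning the same plane as $\{v,e_{n+1}\}$, one gets $W=\{v,e_{n+1}\}^{\perp}=v^{\perp}\cap H$. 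Hence $A\,|\,s^{\perp}=P\,|\,(v^{\perp}\cap H)$ is exactly the $(n-1)$-dimensional shadow of $P$ inside $H$, whose volume is the brightness $b_P(v)$ computed in $H$. Assembling the pieces yields
$$
b_{H_P}(w)=\sin\alpha\,\vol_n(P)+\ell\cos\alpha\,b_P(v).
$$

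This expression involves $P$ only through $\vol_n(P)$ and $b_P$, and the identical formula holds with $Q$ in place of $P$. Since $\vol_n(P)=\vol_n(Q)$ and $b_P(v)=b_Q(v)$ for every $v$ by hypothesis, I would conclude $b_{H_P}(w)=b_{H_Q}(w)$ for all $w\in\S^{n}$, which is the assertion. I expect the main obstacle to be precisely the angle bookkeeping in the last paragraph — justifying the volume-scaling factor $\sin\alpha$ for the flat projection and verifying the subspace identity $w^{\perp}\cap s^{\perp}=v^{\perp}\cap H$ that converts the cross-term into $b_P(v)$. An alternative that sidesteps these projection manipulations is to compute the surface area measure of the prism, $S_n(H_P,\cdot)=\vol_n(P)\,(\delta_{e_{n+1}}+\delta_{-e_{n+1}})+\ell\,\tilde S_{n-1}(P,\cdot)$ (two atoms from the top and bottom faces, plus $\ell$ times the lifted lateral measure), and to substitute it into Cauchy's projection formula \eqref{eq:Cauchy'sproj}; splitting the integral over the two atoms and over the equatorial sphere reproduces the same formula.
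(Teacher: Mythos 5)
Your argument is correct and is essentially the paper's own proof: since $\conv\bigl(A\cup(A+t)\bigr)=A+[0,t]$, your Minkowski-sum decomposition $H_P|w^{\perp}=(P|w^{\perp})+(I|w^{\perp})$ together with the body-plus-segment volume formula is exactly the paper's step of writing the shadow of the prism as the convex hull of $P'$ and its translate and invoking Equation~\ref{eq:basicid}, and both computations land on the same identity $\vol_n(H_P|w^\perp)=\sin\alpha\,\vol_n(P)+\ell\cos\alpha\,b_P(v)$ with the same identification $w^{\perp}\cap \hat s^{\perp}=v^{\perp}\cap H$. The only differences are cosmetic (you fold the paper's separately treated orthogonal case into a limiting value of $\alpha$, and you add an optional surface-area-measure route that the paper does not pursue).
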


\begin{proof}
The equality $\vol_{n+1}(H_P)=\vol_{n+1}(H_Q)$ is obvious.

First we prove that the orthogonal projections $P'$ and $Q'$ of $P$ and $Q$ to a subspace $\Pi$ with unit normal vector $u$ have the same Hausdorff measure. (In the case, when $\Pi$ doesn't orthogonal to $H$ the projection has non-zero $n$-dimensional volume which agrees with the measure above, and in the orthogonal case this measure is equal to the $(n-1)$-dimensional volume of the projection.) Let the unit normal vector of $H$ is $h$ and $I=\alpha h$. Since $P$ and $Q$ have the same $n$-dimensional volume we have $\vol_n(P')=|\langle u,h\rangle |\vol_n(P)=|\langle u,h\rangle | \vol_n(Q)=\vol_n(Q')$. If it is non-zero then the projections have the same $n$-dimensional volume. If it is zero $H$ and $\Pi$ are orthogonal to each other and the projections of $P'$ and $Q'$ are the respective shadows of $P$ and $Q$ on an $n-1$-dimensional subspace of $H$. Since their brightness functions are agree the $(n-1)$-dimensional volumes of $P'$ and $Q'$ are equals to each other, as we stated.

Assume that $H$ and $\Pi$ are not orthogonal. Then the projection $(H_P|u^\bot)$ is the convex-hull of the projection $P'$ and $P'+(\alpha h-\langle u,\alpha h \rangle u)$ of the polytopes $P$ and $P+\alpha h$ of dimension $n$, where $+$ denotes the vector sum. Hence by Equation  \ref{eq:basicid}
$$
\vol_n(H_P|u^\bot)=\vol_n(P')+|\alpha||( h-\langle u, h \rangle u)|\vol_{n-1}(P'|( h-\langle u, h \rangle u)^\bot).
$$
On the other hand we have
$$
(P'|( h-\langle u, h \rangle u)^\bot)=((P|\Pi)|( h-\langle u, h \rangle u)^\bot)=(P|H\cap \Pi),
$$
since $\langle v,u\rangle=0$ and $ \langle v,h-\langle u, h \rangle u\rangle=0$ ensures $\langle v,h\rangle=0$.
Thus
$$
\vol_n(H_P|u^\bot)=\vol_n(P')+|\alpha||( h-\langle u, h \rangle u)|\vol_{n-1}(P|H\cap \Pi)=
$$
$$
\vol_n(Q')+|\alpha||( h-\langle u, h \rangle u)|\vol_{n-1}(Q|H\cap \Pi)=\vol_n(H_Q|u^\bot),
$$
as we stated.

If $H$ and $\Pi$ are orthogonal then
$$
\vol_n(H_P|u^\bot)=\vol_{n-1}(P|H\cap \Pi)=\vol_{n-1}(Q|H\cap \Pi)=\vol_n(H_Q|u^\bot),
$$
immediately gives the required result.
\end{proof}

Using Proposition \ref{prop:samebrightnessandvolume} we get example in arbitrary dimension for non-congruent convex bodies with the same brightness function and volume.

The following problem leads to some interesting connections among volume functions.

\begin{defi}[\cite{gho-langi-convhull}]\label{def:translconstvol}
If, for a convex body $K \in \R^n$, we have that $\vol_n (\conv ((v+K) \cup (w+K)))$ has the same value for any touching pair of translates,
let us say that $K$ satisfies the \emph{translative constant volume property}.
\end{defi}

If $K$ is centred and satisfies the translative constant volume property then $G_K(x)$ depends only on the norm $\|x\|_K$ of $x$. Hence the question which bodies satisfy the translative constant volume property is analogous to the question of Meyer-Reissner-Schmuckenschl\"ager \cite{meyer-reisner-schmuckenschlager} on covariogram in 1993. They proved that if $K$ a centred body and it has the property that $g_K(x)$ depends only on the norm $\|x\|_K$ of $x$, then $K$ is an ellipsoid.

We recall that a $2$-dimensional $o$-symmetric convex curve is a Radon curve, if, for the convex hull $K$ of a suitable affine image of the curve, it holds that $K^\circ$ is a rotated copy of $K$ by $\frac{\pi}{2}$ (cf. \cite{martini-swanepoel-antinorm}). We noted in \cite{gho-langi-convhull} that the concept of Radon curve arose in connect with the examination of the Birkhoff orthogonality in Minkowski normed spaces.

In the paper \cite{gho-langi-convhull} we can find the following theorem:

\begin{theorem}[G.Horv\'ath-\L\'angi,2014]\label{thm:gholangitransvolprop}
For any disk (plane convex body) the following are equivalent.
	\begin{itemize}
		\item[(1)] $K$ satisfies the translative constant volume property.
		\item[(2)] The boundary of the central symmetral of $K$ is a Radon curve.
		\item[(3)] $K$ is a body of constant width in a Radon norm.
	\end{itemize}
\end{theorem}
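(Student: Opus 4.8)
The plan is to push everything through the Lemma: reduce the geometric property (1) to a single scalar identity, recognise that identity as the Radon condition for $\triangle K$, and then match it with the Minkowski meaning of constant width. First I would note that $G_K$ and $DK$ are translation invariant, so the property in Definition \ref{def:translconstvol} depends only on the shape of $K$, and that two translates $v+K$ and $w+K$ form a touching pair exactly when $w-v\in\bd(DK)$, with $\vol_2(\conv((v+K)\cup(w+K)))=G_K(w-v)$. Thus (1) says precisely that $G_K$ is constant on $\bd(DK)$. Writing a boundary point as $\rho_{DK}(u)u$ for $u\in\S^1$, where $\rho_{DK}$ is the radial function of $DK$, identity (\ref{eq:basicid}) gives $G_K(\rho_{DK}(u)u)=\vol_2(K)+\rho_{DK}(u)\,b_K(u)$, so (1) is equivalent to the requirement that $\rho_{DK}(u)\,b_K(u)$ be independent of $u$. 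Setting $C=\triangle K=\tfrac12 DK$ and letting $R$ denote the rotation by $\tfrac\pi2$, one has $\rho_C=\tfrac12\rho_{DK}$ and $h_C=\tfrac12 w_K$, and using $w_{\triangle K}=w_K$ together with the planar identity $b_K(u)=\vol_1(K|u^\perp)=w_K(Ru)=2h_C(Ru)$ the product rewrites as $\rho_{DK}(u)\,b_K(u)=4\,\rho_C(u)\,h_C(Ru)$. Hence (1) is equivalent to $\rho_C(u)\,h_C(Ru)=\mathrm{const}$.

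Next, to obtain (1)$\Leftrightarrow$(2), I would substitute $h_C(Ru)=1/\rho_{C^\circ}(Ru)$ from the polar relations $\rho_{C^\circ}=1/h_C$ and $h_{C^\circ}=1/\rho_C$. The identity $\rho_C(u)h_C(Ru)=c$ then reads $\rho_C(u)=c\,\rho_{C^\circ}(Ru)$, and replacing $u$ by $R^{-1}u$ yields $\rho_{RC}=c\,\rho_{C^\circ}$ as functions on $\S^1$, that is, $C^\circ=\tfrac1c\,(RC)$. By the definition of a Radon curve recalled above, this is exactly the statement that $\bd C=\bd(\triangle K)$ is a Radon curve, so (1) and (2) are equivalent.

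For (2)$\Leftrightarrow$(3) I would use the Minkowski notion of width: for a norm with $o$-symmetric unit ball $B$, the distance between two parallel supporting lines of $K$ with outer normals $\pm u$ equals $w_K(u)/h_B(u)$, so $K$ has constant width $\lambda$ in this norm iff $w_K=\lambda h_B$, equivalently (since $w_K=w_{\triangle K}=2h_{\triangle K}$) iff $\triangle K=\tfrac\lambda2 B$. If (2) holds, choosing $B=\triangle K$ produces a Radon norm in which $K$ has constant width, giving (3); conversely, if $K$ has constant width in a Radon norm with unit ball $B$, then $\triangle K$ is a homothet of the Radon disk $B$, so its boundary is again a Radon curve, giving (2).

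The computational heart, and the main obstacle, is the polarity step: correctly turning the pointwise product $\rho_C(u)h_C(Ru)=\mathrm{const}$ into the relation $C^\circ=\tfrac1c\,RC$ and matching it — including the scaling freedom and the sense of the rotation — with the ``suitable affine image'' occurring in the paper's definition of a Radon curve. A secondary point requiring care is the planar identity $b_K(u)=w_K(Ru)$ together with the bookkeeping of the constant factors, and confirming that $w_K(u)/h_B(u)$ is indeed the right notion of width so that (3) lines up with the homothety $\triangle K=\tfrac\lambda2 B$.
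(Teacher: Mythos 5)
Your proposal is correct, and it follows what is essentially the only natural route (and, as far as the argument in \cite{gho-langi-convhull} goes, the same one): note that the survey itself states Theorem \ref{thm:gholangitransvolprop} without proof, so the comparison is with the cited source rather than with a proof printed here. Your reductions check out: touching pairs correspond exactly to $w-v\in\bd(DK)$, formula (\ref{eq:basicid}) turns (1) into the constancy of $\rho_{DK}(u)\,b_K(u)$, the planar identity $b_K(u)=w_K(Ru)$ and $h_{\triangle K}=\tfrac12 w_K$ give the product form $\rho_C(u)h_C(Ru)$ with $C=\triangle K$, polarity converts this into $C^\circ=\tfrac1c RC$, and the Minkowski-width computation $w_K/h_B$ correctly identifies (3) with $\triangle K$ being a homothet of a Radon disk.

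The one place you must actually write something down is the step you yourself flag: matching $C^\circ=\tfrac1c RC$ with the definition of a Radon curve via a \emph{suitable affine image}. The direction $C^\circ=\tfrac1c RC\Rightarrow(2)$ is immediate (take the dilate $C/\sqrt{c}$, whose polar is exactly its quarter-turn). The converse is the nontrivial half: if some affine image $L=TC$ satisfies $L^\circ=RL$, you need $C^\circ=(TC)^{-\circ}=T^{t}L^\circ=T^{t}RTC$ to again be a dilate of $RC$. This follows from the two-dimensional identity $T^{t}RT=\det(T)\,R$ (equivalently, every $2\times2$ antisymmetric matrix is a multiple of $R$), which also shows that the condition ``$\rho_C(u)h_C(Ru)$ is constant'' is invariant under all invertible linear maps, so the affine freedom in the definition of a Radon curve causes no loss. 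With that one-line computation inserted, the chain (1)$\Leftrightarrow$(2)$\Leftrightarrow$(3) is complete.
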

This statement motivates a new conjecture, since it is known (cf. \cite{alonso-benitez} or \cite{martini-swanepoel-antinorm}) that for $d \geq 3$, if every planar section of a normed space is Radon, then the space is Euclidean; that is, its unit ball is an ellipsoid.

\begin{conj}[G.Horv\'ath-L\'angi, 2014]\label{conj:translconstvol}
Let $d \geq 3$. If some centrally symmetric convex body $K \in \R^d$ satisfies the translative constant volume property, then $K$ is an ellipsoid.
\end{conj}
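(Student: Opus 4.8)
My plan is to rewrite the translative constant volume property for a centrally symmetric body in closed form, reduce the conjecture to a characterization of ellipsoids by their projection bodies, and then attack that either through the planar Radon rigidity quoted above or analytically. Since $K$ is centrally symmetric about the origin, its difference body is $\mathrm{D}K=K-K=2K$, so two translates $v+K$ and $w+K$ form a touching pair exactly when $w-v\in\partial(2K)$, i.e. $w-v=2\rho_K(u)\,u$ for a Euclidean unit vector $u$, where $\rho_K(u)=1/\|u\|_K$ is the radial function. Applying the basic identity (\ref{eq:basicid}) to $t=w-v$ (so $|t|=2\rho_K(u)$) gives
\[
\vol_d\big(\conv((v+K)\cup(w+K))\big)=\vol_d(K)+2\,\rho_K(u)\,b_K(u).
\]
Hence $K$ has the translative constant volume property if and only if $\rho_K(u)\,b_K(u)$ is independent of $u$. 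Writing $b_K=h_{\Pi K}$ (Cauchy's formula (\ref{eq:Cauchy'sproj})) and $1/\rho_K(u)=\|u\|_K=h_{K^\circ}(u)$, this is precisely
\[
\Pi K=c\,K^\circ\qquad\text{for some constant }c>0 .
\]
Ellipsoids do satisfy this: for a ball $b_K$ is constant, so $\Pi K$ and $K^\circ$ are both balls, and the general case follows because both the operator $\Pi$ and polarity transform compatibly under linear maps, exactly as the property itself is affine invariant. Thus the conjecture is equivalent to the assertion that for $d\ge 3$ the only centrally symmetric solutions of $\Pi K=c\,K^\circ$ are ellipsoids.

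In the plane this is exactly Theorem \ref{thm:gholangitransvolprop}: there $\Pi K$ is a dilate of the $\tfrac{\pi}{2}$-rotation of $K$, so $\Pi K=c\,K^\circ$ reads ``$K^\circ$ is a rotated dilate of $K$,'' which is the statement that $\partial(\triangle K)$ is a Radon curve. The natural route to $d\ge 3$, and the reason the rigidity result on Radon planar sections was recalled, is to try to show that the $d$-dimensional identity forces every $2$-dimensional central section $K\cap H$ to be a Radon curve. Since these sections are the unit disks of the planar restrictions of the norm $\|\cdot\|_K$, the quoted theorem (all planar sections Radon $\Rightarrow$ Euclidean) would then force $K$ itself to be an ellipsoid.

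The hard part will be exactly this passage from the global identity to the sections. The property is intrinsically \emph{projective}: through Cauchy's formula, $b_K$ and hence $\Pi K$ are governed by the surface area measure $S_{d-1}(K,\cdot)$, i.e. by projections of $K$, whereas the Radon-ness of a section $K\cap H$ concerns $(K\cap H)^\circ=K^\circ|H$, i.e. sections. Projection bodies do not restrict to lower-dimensional sections in any simple way, so the planar characterization does not lift coordinate-wise and $\Pi K=c\,K^\circ$ does not visibly imply the planar constant volume property for each $K\cap H$. I would therefore also attack $\Pi K=c\,K^\circ$ directly as a functional equation: $u\mapsto h_{\Pi K}(u)$ is the cosine transform of $S_{d-1}(K,\cdot)$ while $h_{K^\circ}=\|\cdot\|_K$ is a radial datum, so the identity is a nonlinear, nonlocal integral equation coupling the surface measure of $K$ to its gauge. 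Expanding in spherical harmonics, on which the cosine transform acts by explicit multipliers, and using Minkowski's first inequality together with its equality case — which already singled out central symmetry in the comparison of $K$ with $\triangledown K$ — to control the nonlinear terms looks like the most promising and the most delicate step. Establishing \emph{global} rigidity of the ball among solutions, rather than merely infinitesimal rigidity, is where I expect the real obstacle to lie, and is presumably why the statement remains a conjecture.
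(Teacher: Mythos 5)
You should first be clear that the statement you were asked to prove is labelled a \emph{conjecture} in the paper (Conjecture \ref{conj:translconstvol}): the paper offers no proof of it, and your proposal does not amount to one either, as you yourself concede in the last paragraph. What your reduction actually establishes --- and establishes correctly --- is the \emph{equivalence} of the translative constant volume property with Petty's polar projection problem (Conjecture \ref{conj:polarprojection}). Your computation is sound: for a centred $K$ the touching translates are exactly those with $w-v=2\rho_K(u)u$, equation (\ref{eq:basicid}) turns the constancy of the hull volume into the constancy of $\rho_K(u)\,b_K(u)$, and rewriting $b_K=h_{\Pi K}$ and $1/\rho_K=h_{K^\circ}$ gives $\Pi K=c\,K^\circ$, i.e.\ $(\Pi K)^\circ=\lambda K$. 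This is precisely the equivalence the paper asserts in one sentence and defers to ``a forthcoming paper,'' so this part of your work is a genuine contribution relative to what is printed here, and your check that ellipsoids satisfy the identity (via the equivariance of $\Pi$ and of polarity under linear maps) is also correct, as is your identification of the planar case with the Radon-curve condition of Theorem \ref{thm:gholangitransvolprop}.

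The genuine gap is the rigidity step: showing that for $d\geq 3$ the only centrally symmetric solutions of $\Pi K=c\,K^\circ$ are ellipsoids. Everything you say about it is a programme, not an argument. The route through planar sections stalls for exactly the reason you name --- $\Pi K$ is built from the surface area measure, hence from projections, while the Radon condition on $K\cap H$ lives on sections, and there is no known way to localize the identity $\Pi K=c\,K^\circ$ to a two-dimensional central section; so the quoted theorem ``all planar sections Radon $\Rightarrow$ Euclidean'' cannot be invoked. The spherical-harmonics route runs into the standard difficulty that the cosine transform linearizes $S_{d-1}(K,\cdot)\mapsto h_{\Pi K}$ but the map $K\mapsto S_{d-1}(K,\cdot)$ and the polarity $K\mapsto K^\circ$ are both nonlinear, so the equation does not decouple mode by mode, and Minkowski's first inequality only yields the symmetry of the extremizer, not uniqueness among solutions of the functional equation. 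Since both of your proposed attacks terminate at an acknowledged obstacle, the conjecture remains exactly as open after your proposal as before it; what you have is a correct and useful reformulation, which you should present as such rather than as a proof.
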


If $K$ and $K+t_p$ are touching in the point $p\in K$ then $p-t_p\in K$ also holds and from Equation \ref{eq:basicid} we get that
$G_K(t_p)=\vol(K)+|t_p|\vol_{n-1}(K|t_p^\perp)$. The chord $[p,p-t_p]$ of $K$ is a so-called \emph{affine diameter} of $K$. The general properties of affine diameters are collected in the survey of V. Soltan in \cite{soltan}. In the centrally symmetric case we know that a chord containing the centre is an affine diameter. These affine diameters determine the \emph{radial function} of the body $K$. If the origin $x$ is the centre of $K$ then we define the radial function with the equality:
\begin{equation}\label{def:radialfunction}
\rho_K(u):=\sup\{t\in\R: tu\in K\}.
\end{equation}
The radial function can be considered for all convex body if we assume that the origin is an interior point of $K$. There is the following connection between the support function and the radial function:
$$
\rho_K(u)=\frac{1}{h_{K^\circ}(u)},
$$
where $K^\circ$ is the polar body of $K$. The \emph{polar projection body} of $K$ is the body $\Pi^\circ K:=\Pi K^\circ$. An old problem is the so-called \emph{polar projection problem} is raised by Petty in \cite{petty} and is mentioned by Gruber in \cite{gruber} and Lutwak in \cite{lutwak-affineisop}.

\begin{conj}[Petty, 1971]\label{conj:polarprojection}
If some centrally symmetric convex body $K \in \R^d$ satisfies $\Pi^\circ K=\lambda K$, then $K$ is an ellipsoid.
\end{conj}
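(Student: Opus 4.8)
Since Conjecture \ref{conj:polarprojection} is open, I can only propose a line of attack rather than a complete argument.

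The plan is to begin from the affine covariance of the operator $\Pi^\circ$. Combining the transformation rule for projection bodies, $\Pi(TK)=|\det T|\,T^{-t}\Pi K$, with the polarity rule $(TK)^\circ=T^{-t}K^\circ$, one computes $\Pi^\circ(TK)=|\det T|^{-1}\,T\,\Pi^\circ K$ for every $T\in GL(d)$. Hence if $K$ satisfies $\Pi^\circ K=\lambda K$ then so does $TK$, with $\lambda$ rescaled; the solution class is thus $GL(d)$-invariant, and a direct computation shows that every ellipsoid lies in it. This lets me normalise $K$ (say to John position) and reduces the conjecture to the rigidity statement that the $GL(d)$-orbit of the ball exhausts the solution set.

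Next I would recast the fixed-point equation variationally. The functional
\[
\Phi(K):=\vol_d(K)^{d-1}\vol_d(\Pi^\circ K)
\]
is, by the covariance above, invariant under both $GL(d)$ and dilations. Taking its first variation through the standard formula $\frac{\mathrm{d}}{\mathrm{d}t}\vol_d(K_t)=\int_{\S^{d-1}}\dot h\,\mathrm{d}S_{d-1}(K,\cdot)$ together with the corresponding variation of $\vol_d(\Pi^\circ K)$ via Cauchy's formula (\ref{eq:Cauchy'sproj}), I expect the Euler--Lagrange equation to reduce exactly to the proportionality $\Pi^\circ K=\lambda K$. The bodies in the conjecture would then be precisely the critical points of $\Phi$ among centrally symmetric bodies, and since ellipsoids are the extremisers of such affine-isoperimetric quantities (in the spirit of the Petty projection and Blaschke--Santal\'o inequalities), the conjecture becomes the claim that $\Phi$ has no critical points other than its global extremisers.

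To establish infinitesimal rigidity I would restrict to $C^2_+$ bodies, write $h_K=1+\varepsilon f$ with $f$ even on $\S^{d-1}$, and compute the Hessian of $\Phi$ at the ball. Because the whole construction is $O(d)$-equivariant, this Hessian is block-diagonal over the spherical-harmonic degrees, each block acting as a scalar multiplier. The degeneracy on harmonics of degree $\le 2$ is forced by affine invariance and corresponds exactly to the tangent directions of the ellipsoid family (dilations in degree $0$, $SL(d)$ deformations in degree $2$); local rigidity therefore amounts to strict definiteness of the Hessian on harmonics of degree $\ge 4$. The needed multipliers are products of the Funk--Hecke eigenvalues of the cosine transform with kernel $|\langle u,v\rangle|$ and of the linearised Minkowski (surface-area) operator; verifying that the resulting eigenvalues never again hit the critical value is the delicate but essentially routine analytic core of this step. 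The hard part will be the global step: promoting infinitesimal rigidity near an ellipsoid to a classification of all critical points of $\Phi$. The operator $\Pi^\circ$ is strongly nonlinear and non-local, and the absolute value in Cauchy's kernel makes it non-smooth away from $C^2_+$ bodies, so neither a naive degree-theoretic continuation nor a direct convexity argument applies. The two routes I would pursue are (i) exhibiting, after affine normalisation, a monotonicity or geodesic-convexity property of $\Phi$ forcing every critical point to be a global extremiser, or (ii) combining the equality case of one affine inequality with a second, independent one so that the two rigidity conditions together pin $K$ down to an ellipsoid. It is exactly this passage from the local/variational picture to a global classification that has kept the conjecture open, and I would expect any genuine proof to require a new affine-isoperimetric input rather than a sharpening of the linearisation.
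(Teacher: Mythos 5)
The statement you were asked to prove is Petty's conjecture, which the paper records as an \emph{open problem}: it offers no proof, and its only substantive comment is the (unproved, deferred to a forthcoming paper) claim that it is equivalent to Conjecture \ref{conj:translconstvol} on the translative constant volume property. So there is no argument of the paper's to compare yours against, and your own text correctly concedes that what you have written is a programme, not a proof. The verifiable ingredients of that programme are sound: the covariance $\Pi^\circ(TK)=|\det T|^{-1}\,T\,\Pi^\circ K$ does follow from $\Pi(TK)=|\det T|\,T^{-t}\Pi K$ and $(SL)^\circ=S^{-t}L^\circ$; the functional $\Phi(K)=\vol_d(K)^{d-1}\vol_d(\Pi^\circ K)$ is indeed invariant under $GL(d)$ and dilations; it is classical (and was essentially Petty's own motivation) that its critical points satisfy $\Pi^\circ K=\lambda K$; and the Petty projection inequality identifies ellipsoids as the global maximisers of $\Phi$. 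Note also that your route is genuinely different from the one the paper hints at, which goes through the convex-hull function and the translative constant volume property rather than through a variational principle.

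The genuine gap is the one you name yourself, and it is not a technicality: nothing in the sketch supplies the passage from infinitesimal rigidity at the ball to the classification of \emph{all} critical points of $\Phi$. The Funk--Hecke multiplier computation on even spherical harmonics only controls a neighbourhood of the ball in a $C^2_+$ topology, while the fixed-point equation $\Pi^\circ K=\lambda K$ is a global, nonlinear, non-local condition on bodies that need not be smooth; no monotonicity, geodesic convexity, or second independent rigidity condition of the kind you propose in routes (i) and (ii) is currently known to exist, and this is precisely why the conjecture has remained open since 1971. Your proposal should therefore be read as a correct and standard framing of the problem, consistent with the literature the paper cites, rather than as progress toward a proof; as it stands it establishes nothing beyond what is already known about the conjecture.
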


This problem has a form also in the non-symmetric case, asking that which bodies has the property that its projection bodies and difference bodies are polars of each other. Martini in \cite{martini-polarprojection} proved that for polytopes the only one with this property is the simplex. The reason that we mentioned here the polar projection problem that it is equivalent to the conjecture on translative constant volume property. (We will prove it in a forthcoming paper.)

\section{Remarks on the homothetic versions of the above problems}

First of all, I would like to mention the paper of Meyer-Reisner-Schmuckenschl\"ager \cite{meyer-reisner-schmuckenschlager} in which we can find the following theorem:

\begin{theorem}[Meyer-Reisner-Schmuckenschl\"ager,1993] Let $K\in \R^n$ be a centrally symmetric convex body. If for some $\tau$ the
volume $\vol_n (K \cap \{\tau K + x\})$ depends only on the Minkowski norm $\|x\|_K$, then $K$ is an ellipsoid.
\end{theorem}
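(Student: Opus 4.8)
The plan is to read off the local geometry of $\partial K$ from the behaviour of the intersection volume near the boundary of its support, and to show that the hypothesis forces the Monge--Amp\`ere equation characterising ellipsoids. Write $f(x):=\vol_n(K\cap(\tau K+x))=(\mathbf 1_K\star \mathbf 1_{\tau K})(x)$, using $-K=K$. Just as with the difference body \eqref{def:differencebody}, the support of $f$ is $K+\tau K=(1+\tau)K$, so the hypothesis that $f$ depends only on $\|x\|_K$ says precisely that the level sets of $f$ are the dilates $\partial(rK)$, and in particular that along every ray $f$ tends to $0$ at a rate governed only by the gauge value $s:=(1+\tau)-\|x\|_K$. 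I would first assume $K$ is of class $C^2_+$ and recover the general case afterwards; in fact the direction-independence of the rate below will exclude both flat facets and corners, so $C^2_+$-regularity ought to fall out as a by-product rather than being assumed.

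The heart of the argument is the boundary asymptotics of $f$ as $x\to\partial((1+\tau)K)$. As $s\to 0^+$ the bodies $K$ and $\tau K+x$ separate, touching in the limit at a single point $p_0\in\partial K$ with outer unit normal $\nu$, where $x_0=(1+\tau)p_0$. For small $s$ the overlap $K\cap(\tau K+x)$ is a thin lens around $p_0$ trapped between two osculating paraboloids, and the standard lens estimate gives
\[
f(x)=c_n\,\big(\det(A_1+A_2)\big)^{-1/2}\,\delta^{(n+1)/2}+o\big(\delta^{(n+1)/2}\big),
\]
where $A_1$ is the second fundamental form of $\partial K$ at $p_0$, $A_2$ that of $\partial(\tau K)$ at its contact point, and $\delta$ is the normal penetration depth. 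Scaling shows $A_2=\tau^{-1}A_1$, whence $\det(A_1+A_2)=(1+\tau^{-1})^{\,n-1}\kappa_K(\nu)$, with $\kappa_K(\nu)$ the Gauss curvature of $\partial K$ at $p_0$. A short computation along the ray converts the gauge decrement into penetration depth as $\delta=s\,h_K(\nu)$, since $\langle p_0,\nu\rangle=h_K(\nu)$. Substituting,
\[
f(x)=c_n\,(1+\tau^{-1})^{-(n-1)/2}\,\kappa_K(\nu)^{-1/2}\,h_K(\nu)^{(n+1)/2}\,s^{(n+1)/2}+o\big(s^{(n+1)/2}\big).
\]

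Now I would invoke the hypothesis. Since $f$ depends only on $\|x\|_K=(1+\tau)-s$, the coefficient of $s^{(n+1)/2}$ must be independent of the direction $\nu$; the $\tau$-factor and $c_n$ are constants, so this forces
\[
\frac{h_K(\nu)^{\,n+1}}{\kappa_K(\nu)}=\mathrm{const}\qquad(\nu\in\S^{n-1}).
\]
Writing the reciprocal Gauss curvature as $\kappa_K(\nu)^{-1}=\det(\nabla^2 h_K+h_K\Id)$ on $\S^{n-1}$, this is the Monge--Amp\`ere equation $\det(\nabla^2 h_K+h_K\Id)=c\,h_K^{\,n+1}$, whose only origin-symmetric solutions are the support functions of centred ellipsoids (in the planar case one checks directly that the ODE $h^3(h+h'')=\mathrm{const}$ has exactly the ellipses as $2\pi$-periodic positive solutions). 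Hence $K$ is an ellipsoid; note that $\tau$ has disappeared from the characterising equation, in agreement with the conclusion holding for every admissible $\tau$.

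The \textbf{main obstacle} will be making the lens asymptotics rigorous with a \emph{direction-uniform} remainder, including the precise identification of the curvature factor and the elementary but sign-sensitive conversion $\delta=s\,h_K(\nu)$; without uniformity one cannot legitimately equate leading coefficients across directions. The second delicate point is dispensing with the $C^2_+$ assumption: a flat boundary piece forces $\kappa_K=0$ and a contribution larger than $s^{(n+1)/2}$, while an edge or vertex changes the exponent outright, so a finite nonzero constant rate should compel strict convexity and $C^2$-smoothness — but turning this heuristic into a proof (e.g.\ treating $C^{1,1}$ boundaries and arguing through the surface area measure $S_{n-1}(K,\cdot)$ directly) requires care. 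Finally, I would cite the uniqueness theorem for the above Monge--Amp\`ere equation rather than reprove it.
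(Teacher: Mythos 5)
First, a point of orientation: the survey you are working from does not prove this theorem at all --- it is quoted from Meyer--Reisner--Schmuckenschl\"ager \cite{meyer-reisner-schmuckenschlager} without proof --- so there is no in-paper argument to compare against. Judged on its own terms, your strategy is the natural one and is, in outline, the route of the original paper: read the Gauss curvature off the decay of $f(x)=\vol_n(K\cap(\tau K+x))$ at the boundary of its support $(1+\tau)K$, derive $h_K(\nu)^{n+1}/\kappa_K(\nu)=\mathrm{const}$, and conclude via the affine-sphere characterization of ellipsoids. Your individual computations are correct: the support of $f$, the penetration depth $\delta=s\,h_K(\nu)$, the scaling $A_2=\tau^{-1}A_1$ (which uses central symmetry), and the lens coefficient all check out. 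One slip: the displayed Monge--Amp\`ere equation should read $\det(\nabla^2h_K+h_K\Id)=c\,h_K^{-(n+1)}$, not $c\,h_K^{\,n+1}$; your own planar check $h^3(h+h'')=\mathrm{const}$ and the relation $h_K^{n+1}/\kappa_K=\mathrm{const}$ are consistent with the former, and it is the former (the equation of an elliptic affine sphere centred at the origin) whose compact solutions are classified as ellipsoids by the Blaschke--Deicke theorem, the result you need to cite.

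There are, however, two genuine gaps, and you have labelled the wrong step as the main obstacle. No direction-uniform remainder is needed to ``equate leading coefficients across directions'': the hypothesis says that along the ray through any $p_0\in\partial K$ the function $f$ is the \emph{same} function $\phi\bigl((1+\tau)-s\bigr)$ of $s$, so the single number $L:=\lim_{s\to0^+}\phi((1+\tau)-s)\,s^{-(n+1)/2}\in(0,+\infty]$ is computed once and for all, and every direction at which the pointwise lens asymptotics hold must reproduce it; uniformity is irrelevant. The real difficulties are these. (i) You must show $L<+\infty$, i.e.\ rule out the alternative that the Alexandrov second fundamental form is degenerate at almost every boundary point. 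Your heuristic handles facets and vertices (which change the exponent), but a convex body of zero affine surface area need not be a polytope: it can be strictly convex and smooth with $\kappa_K=0$ a.e., in which case every pointwise limit is $+\infty$ and your argument as written produces no contradiction. Excluding this case needs an additional idea, not just care. (ii) The identity $h_K^{n+1}/\kappa_K=\mathrm{const}$ is obtained only at points of second-order differentiability, i.e.\ almost everywhere with respect to surface measure, whereas the Blaschke--Deicke theorem is a statement about genuinely smooth affine spheres; you must pass through the surface-area-measure formulation of the equation and Monge--Amp\`ere regularity theory before you are entitled to cite it. With (i) and (ii) supplied the proof is complete; without them it is an outline with a correct skeleton but a hole at exactly the place where degenerate bodies could hide.
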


For the convex-hull function an analogous was given by J.J.Castro in \cite{castro}. He proved the following two statements:

\begin{theorem}[Castro, 2015]Let $K\in \R^n$ be a convex body with the origin $O$ in its interior. If there is
a number $\lambda \in (0, 1)$ such that $\vol_n \conv\{(K \cup \lambda K + x)\}$ depends only on the Euclidean norm
$\|x\|$, then K is a Euclidean ball.
\end{theorem}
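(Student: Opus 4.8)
The plan is to sidestep a direction-by-direction analysis of the convex-hull function and instead exploit only the \emph{plateau} on which that function attains its minimum. Write $F(x):=\vol_n\conv\{K\cup(\lambda K+x)\}$, which is continuous in $x$. Since $\conv\{K\cup(\lambda K+x)\}\supseteq K$, one always has $F(x)\ge\vol_n(K)$, and I would first argue that equality holds precisely when $\lambda K+x\subseteq K$: if $\lambda K+x\subseteq K$ the hull is just $K$, whereas if some point of $\lambda K+x$ lies outside $K$ then the hull contains the full-dimensional body $K$ together with an exterior point and therefore picks up a cone of positive volume. Hence $\vol_n(K)=\min_x F(x)$ and
\[
\{x:F(x)=\vol_n(K)\}=\{x:\lambda K+x\subseteq K\}.
\]

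Next I would compute this minimum set explicitly. In terms of support functions, $\lambda K+x\subseteq K$ is equivalent to $\lambda h_K(v)+\langle x,v\rangle\le h_K(v)$ for every $v\in\S^{n-1}$, i.e. to $\langle x,v\rangle\le(1-\lambda)h_K(v)=h_{(1-\lambda)K}(v)$ for all such $v$; this says exactly $x\in(1-\lambda)K$. Thus the minimum set of $F$ is the dilate $(1-\lambda)K$, a genuine convex body with the origin in its interior (here $0\in\inter K$ and $0<\lambda<1$ are used).

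Now I would invoke the hypothesis. By assumption $F(x)=f(\|x\|)$ for some $f$, so $F$ is invariant under the orthogonal group $O(n)$ and each of its level sets is a union of spheres centred at the origin; in particular the minimum set $(1-\lambda)K$ is $O(n)$-invariant. It remains to note the rigidity fact that a convex body invariant under all rotations about an interior point is a Euclidean ball centred there: if $R$ is its circumradius, the entire sphere $\|x\|=R$ lies on its boundary, so the body contains $\conv\{\|x\|=R\}=\bar B(0,R)$ while also being contained in $\bar B(0,R)$, forcing equality. Applying this to $(1-\lambda)K$ gives $(1-\lambda)K=\bar B(0,R)$, hence $K=\bar B\!\left(0,R/(1-\lambda)\right)$ is a ball.

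Two points deserve care rather than being routine. The first is the strict inequality $F(x)>\vol_n(K)$ the moment $\lambda K+x$ protrudes from $K$, which is exactly where full dimensionality of $K$ enters. The second, and the real heart of the matter, is the rigidity step together with the remark that radial invariance of $F$ transfers to its minimum set; I expect packaging this rigidity cleanly to be the main (though still modest) obstacle. Finally I would note why the \emph{Euclidean} norm makes this shortcut work: had the hypothesis instead referred to the gauge $\|x\|_K$, the set $\{\|x\|_K\le c\}$ equals $cK$, so the plateau $(1-\lambda)K$ would automatically be a level set of $\|\cdot\|_K$ and the argument would be vacuous; one would then be thrown back onto a global analysis of $F$, in the spirit of the Meyer--Reisner--Schmuckenschl\"ager theorem, whose weaker (gauge) hypothesis correspondingly yields only the weaker ellipsoidal conclusion.
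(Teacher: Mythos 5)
Your argument is correct and complete. Note first that the survey you are reading does not actually prove this statement: it only cites Castro's Monthly paper and remarks that the ``witty proofs'' there do not extend to Conjecture \ref{conj:translconstvol}, so there is no in-paper proof to compare against. Judged on its own, your proof holds up at every step: the strict inequality $F(x)>\vol_n(K)$ when $\lambda K+x\not\subseteq K$ is justified by the cone-over-an-interior-ball argument (full-dimensionality of $K$ and closedness, so that an exterior point has positive distance from $K$); the identification of the plateau $\{x:F(x)=\vol_n(K)\}$ with $(1-\lambda)K$ via support functions is exact, and the plateau is nonempty because $0\in K$ gives $\lambda K\subseteq K$; and the rigidity step (a compact convex set that is a union of origin-centred spheres and has positive volume equals $\bar B(0,R)$ with $R$ its circumradius) is airtight. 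Your closing remark is also the right diagnostic: the same plateau computation applied to the gauge hypothesis $\|x\|_L$ immediately yields that $(1-\lambda)K$ is a level set of $\|\cdot\|_L$, i.e.\ a dilate of $L$, which is precisely Castro's second theorem and explains why the Euclidean hypothesis upgrades ``homothetic to $L$'' to ``ball.'' This minimum-level-set shortcut is in the spirit of Castro's original argument, and it is notably more elementary than the global, direction-by-direction analysis needed for the Meyer--Reisner--Schmuckenschl\"ager intersection analogue, where the intersection body has no such informative plateau.
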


\begin{theorem}[Castro, 2015]Let $K\in \R^n$ be a convex body with the origin O in its interior and let
$L \subset \R^n$ be a centrally symmetric convex body centered at the origin. If there is a
number $\lambda \in (0, 1)$ such that $\vol_n \conv\{(K \cup \lambda K + x)\}$ depends only on the Minkowski norm
$\|x\|_L$ , then $L$ is homothetic to $K$.
\end{theorem}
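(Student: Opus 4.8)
The plan is to read off the shape of $K$ from the single level set on which the function $G_{K,\lambda}(x) := \vol_n \conv(K \cup (\lambda K + x))$ attains its minimum, rather than from its large-$\|x\|$ behaviour. The starting observation is that $\conv(K \cup (\lambda K + x)) \supseteq K$, so $G_{K,\lambda}(x) \ge \vol_n(K)$ for every $x$, with equality at $x=0$ since $\lambda K \subseteq K$. Moreover $G_{K,\lambda}(x) = \vol_n(K)$ holds \emph{precisely} when $\lambda K + x \subseteq K$: if some point of $\lambda K + x$ lies outside $K$ then, $K$ being $n$-dimensional, the convex hull acquires a region of positive $n$-volume beyond $K$ and the value is strictly larger. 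Hence the minimal level set $M := \{x : G_{K,\lambda}(x) = \vol_n(K)\}$ equals $\{x : \lambda K + x \subseteq K\}$.

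The next step is to identify $M$ explicitly. Reading the containment $\lambda K + x \subseteq K$ through support functions, it says $\langle x,u\rangle + \lambda h_K(u) \le h_K(u)$ for every $u$, i.e.\ $\langle x,u\rangle \le (1-\lambda)h_K(u) = h_{(1-\lambda)K}(u)$ for all $u$, which is exactly the statement $x \in (1-\lambda)K$. Equivalently one invokes the Minkowski identity $(1-\lambda)K + \lambda K = K$, valid for convex $K$, to get $(1-\lambda)K \subseteq M$, the support-function computation supplying the reverse inclusion. Thus $M = (1-\lambda)K$, a convex body with the origin in its interior.

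Now I would feed in the hypothesis. Since $G_{K,\lambda}(x) = \phi(\|x\|_L)$ for some $\phi$, its minimal level set $M$ is a union of $L$-spheres $\{x : \|x\|_L = r\}$; in particular $M$ is \emph{$L$-radial}, meaning $\|y\|_L = \|x\|_L$ and $x \in M$ force $y \in M$. It then remains to observe that an $L$-radial convex body $A$ with the origin in its interior must be a dilate of $L$: choosing $x^\ast \in A$ that maximizes $\|\cdot\|_L$ on $A$, with value $r_0$, the segment $[O,x^\ast] \subseteq A$ meets every sphere $\{\|x\|_L = r\}$ for $0 \le r \le r_0$, and $L$-radiality forces each such sphere into $A$, giving $r_0 L \subseteq A \subseteq r_0 L$. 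Applying this to $A = M = (1-\lambda)K$ yields $(1-\lambda)K = r_0 L$, hence $K = \tfrac{r_0}{1-\lambda}\,L$, so $L$ is homothetic to $K$; as a by-product $K$ is forced to be centrally symmetric, and specializing $L$ to a Euclidean ball recovers the companion statement that $K$ is then a ball.

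The conceptual crux is the decision to extract the answer from the minimum level set: once one sees that $\{G_{K,\lambda} = \vol_n K\}$ is exactly the eroded body $(1-\lambda)K$, the whole problem collapses onto the elementary fact that an $L$-radial convex body with interior origin is an $L$-ball. The two places demanding a little care are the strictness in ``sticking out of $K$ strictly increases the volume'' (where the full dimensionality of $K$ is essential) and the segment argument that fills each $L$-sphere. I expect the only real temptation to overcomplicate lies in the alternative route through the $\|x\|\to\infty$ asymptotics of $G_{K,\lambda}$, which merely gives $\Pi K$ homothetic to $L^\circ$, a constant-brightness-type condition genuinely weaker than $K \sim L$; this is precisely why the minimum-level-set viewpoint, using the hypothesis at small $x$, is the efficient one.
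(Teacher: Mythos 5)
Your argument is correct. The paper itself quotes Castro's theorem without reproducing a proof, so there is nothing internal to compare against; but every step of your reasoning checks out: the equality $G_{K,\lambda}(x)=\vol_n(K)$ holds exactly when $\lambda K+x\subseteq K$ (strictness because a point of $\lambda K+x$ strictly separated from the compact convex full-dimensional $K$ adds a cone of positive volume to the hull), the support-function computation $\langle x,u\rangle\le (1-\lambda)h_K(u)$ identifies this set as $(1-\lambda)K$, and the segment argument correctly shows that a compact convex $L$-radial set with the origin interior is exactly $r_0L$. The conclusion $K=\frac{r_0}{1-\lambda}L$ is in fact slightly stronger than the stated homothety (it pins the center at the origin), which is consistent with Castro's result. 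This minimum-level-set idea is, as far as I can tell, essentially the ``witty proof'' the survey alludes to, and your closing remark explains precisely why it degenerates in the translative case $\lambda=1$ relevant to Conjecture \ref{conj:translconstvol}: there the set $(1-\lambda)K$ collapses to a point and carries no information.
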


Unfortunately, the witty proofs of these statements cannot be applied to the case of Conjecture \ref{conj:translconstvol}. However, these results suggest the following problem:

\begin{defi}
Denote by
$$
G_{K,\lambda}(t):=\vol_n\conv\left\{K\cup \left(\lambda K+t\right)\right\}
$$
the \emph{$\lambda$-homothetic convex-hull function} of $K$.
\end{defi}

\begin{problem}\label{prob:homconvhull}
Does the $\lambda$-homothetic convex-hull function $G_{K,\lambda}(t)$ determine the body $K$? What we can say on $K$ if we know more $\lambda$-homothetic convex-hull functions of it?
\end{problem}

\end{document}